\numberwithin{equation}{section}
\newtheorem{thm}{Theorem}[section]
\newtheorem{prop}[thm]{Proposition}
\newtheorem{lem}[thm]{Lemma}
\newtheorem{cor}[thm]{Corollary}
\newtheorem{rem}[thm]{Remark}
\DeclareMathOperator*{\med}{med} 
\DeclareMathOperator*{\pmed}{h-med} 
\DeclareMathOperator*{\esssup}{ess\,sup} 
\DeclareMathOperator*{\essinf}{ess\,inf} 
\def\Xint#1{\mathchoice
{\XXint\displaystyle\textstyle{#1}}%
{\XXint\textstyle\scriptstyle{#1}}%
{\XXint\scriptstyle\scriptscriptstyle{#1}}%
{\XXint\scriptscriptstyle\scriptscriptstyle{#1}}%
\!\int}
\def\XXint#1#2#3{{\setbox0=\hbox{$#1{#2#3}{\int}$}
\vcenter{\hbox{$#2#3$}}\kern-.5\wd0}}
\def\dashint{\Xint-}
\newcommand{\myqed}{\hspace*{\fill}\hbox{\rule[-2pt]{3pt}{6pt}}}
\newcommand{\link}{\mathop{\circ\kern-.35em -}}
\newcommand{\PA}{\partial}
\newcommand{\ve}{\varepsilon}
\newcommand{\ol}{\overline}
\newcommand{\la}{\lambda}
\newcommand{\La}{\Lambda}    
\newcommand{\de}{\delta}
\newcommand{\lan}{\langle}
\newcommand{\ran}{\rangle}
\newcommand{\tr}{\mathop{\mathrm{tr}}}
\newcommand{\al}{\alpha}
\newcommand{\be}{\beta}
\newcommand{\Om}{\Omega}
\newcommand{\na}{\nabla}
\newcommand{\nr}{\Vert}
\newcommand{\NN}{\mathbb{N}}
\newcommand{\RR}{\mathbb{R}}
\renewcommand{\SS}{\mathbb{S}}
\newcommand{\De}{\Delta}
\newcommand{\cX}{\mathcal{X}}
\newcommand{\om}{\omega}
\newcommand{\si}{\sigma}
\newcommand{\te}{\theta}
\newcommand{\Ga}{\Gamma}
\numberwithin{equation}{section}
\title[Asymptotic mean value property for the $p$-Laplacian]{\bf A natural approach to the \\ asymptotic mean value property \\ for the $p$-Laplacian}
\author[M. Ishiwata]{Michinori Ishiwata}
\address{Department of Systems Innovation
Graduate School of Engineering Science, Osaka University,
1-3 Machikaneyama, Toyonaka, Osaka 560-8531, Japan}
\email{ishiwata@sigmath.es.osaka-u.ac.jp}
\author[R. Magnanini]{Rolando Magnanini}
\address{Dipartimento di Matematica ``U. Dini'', Universit\`a a di Firenze, viale Morgagni 67/A, 50134 Firenze, Italy}
\email{magnanin@math.unifi.it, paolo.salani@math.unifi.it}
\urladdr{http://web.math.unifi.it/users/magnanin}
\author[H. Wadade]{H. Wadade}
\address{Faculty of Mechanical Engineering, Institute of Science and Engineering, Kanazawa University,
Kakuma, Kanazawa, Ishikawa 920-1192, Japan}
\email{wadade@se.kanazawa-u.ac.jp}
\keywords{$p$-harmonic function, asymptotic mean value property, 
$p$-mean value, viscosity solution for $p$-Laplace equation}
\subjclass[2010]{Primary 35J60, 35K55, Secondary 35J92, 35K92.}
\begin{document}

\begin{abstract}
Let $1\le p\le\infty$. We show 
that 
a function $u\in C(\mathbb R^N)$ is a viscosity solution to 
the normalized $p$-Laplace equation $\Delta_p^n u(x)=0$ if and only if the asymptotic formula 
$$
u(x)=\mu_p(\ve,u)(x)+o(\ve^2)
$$
holds as $\ve\to 0$ in the viscosity sense. Here, $\mu_p(\ve,u)(x)$ is 
the $p$-mean value of $u$ on $B_\ve(x)$ characterized as a unique minimizer of 
$$
\inf_{\la\in\RR}\nr u-\la\nr_{L^p(B_\ve(x))}. 
$$
This kind of asymptotic mean value property (AMVP) extends to the case $p=1$ previous (AMVP)'s obtained when $\mu_p(\ve,u)(x)$ is replaced by other kinds of mean values.  The natural definition of $\mu_p(\ve,u)(x)$ makes sure that this is a monotonic and continuous (in the appropriate topology) functional of $u$. These two properties help to establish a fairly general proof of (AMVP), that can also be extended to the (normalized) parabolic $p$-Laplace equation.
\end{abstract}

\maketitle

\section{Introduction and main theorems} 

It is well-known that the classical {\it mean value property} characterizes harmonic functions and helps to derive most of their salient properties, such as weak and strong maximum principles, analyticity, Liouville's theorem, Harnack's inequality and more.  In fact, we know that
a continuous function $u$ is harmonic in an open set $\Om\subseteq\mathbb R^N$ if and only if 
\begin{equation}\label{harmo-stat}
u(x)=\dashint_{B_\ve(x)}u(y)\,dy=\dashint_{\PA B_\ve(x)}u(y)\,dS_y
\end{equation}
for every ball $B_\ve(x)$ with $\ol{B_\ve(x)}\subset\Om$; 
here, $\dashint_E u$ denotes the {\it mean value} of $u$ over a set $E$ with respect to the relevant measure 
(see Evans \cite{E} for instance). 
The relation \eqref{harmo-stat} can also be regarded as a statistical characterization of solutions of the Laplace equation, without an explicit appearance of derivatives of $u$. 
A similar mean value property can also be obtained for linear elliptic equations with constant coefficients, by replacing balls by appropriate ellipsoids (see \cite{BL}
and \cite{BLU}). 
\par
Recently, starting with the work \cite{MPR2} of Manfredi, Parviainen and Rossi, a great attention has been paid to the so-called {\it asymptotic mean value property (AMVP)} and its applications to game theory.
In \cite{MPR2}, based on the formula
\begin{eqnarray}
\dashint_{B_\ve(x)}v(y)\,dy=v(x)+\frac12\,\frac{\De v(x)}{N+2}\,\ve^2+
o(\ve^2)\quad\text{as \,}\ve\to 0,
\label{eq:hf02}
\end{eqnarray}
that holds for any smooth function $v$ not necessarily harmonic,
it is shown that the characterization \eqref{harmo-stat} for the
harmonicity of $u$ can be replaced by the weaker (AMVP):
\begin{equation}\label{harmo-relax}
u(x)=\dashint_{B_\ve(x)}u(y)\,dy+o(\ve^2)\quad\text{as \,}\ve\to 0,
\end{equation}
for all $x\in\Om$.  
\par
Nonetheless, the decisive contribution of \cite{MPR2} is the observation  that,
provided the mean value in \eqref{harmo-relax} is replaced by a suitable (nonlinear) statistical value related to $u$, 
an (AMVP) also characterizes {\it $p$-harmonic} functions, that is the (viscosity) solutions of the {\it normalized $p$-Laplace equation} $\De^n_p u=0$. Here,
\begin{eqnarray*}
&\De^n_p u=\displaystyle\frac{\nabla\cdot(|\nabla u|^{p-2}\nabla u)}{|\nabla u|^{p-2}} \ \mbox{ for } \ 1\le p<\infty,
\ \De^n_\infty u=\displaystyle\frac{\lan\na^2v\,\na v,\na v\ran}{|\na v|^2},
\end{eqnarray*}
denotes the so-called {\it normalized or homogeneous $p$-Laplacian}.
\par
In fact, in the same spirit of \eqref{eq:hf02}, for $1<p\le\infty$ and for any smooth function with $\na v(x)\not=0$,
they proved the formula:
$$
\mu_p^*(\ve,v)=v(x)+\frac12\,\frac{\Delta^n_p v(x)}{N+p}\,\ve^2+
o(\ve^2) \ \mbox{ as } \ \ve\to 0,
$$
where
\begin{equation}
\label{mean-mpr}
\mu^*_p(\ve,u)=\frac{N+2}{N+p}\,\dashint_{B_\ve(x)}u(y)\,dy+\\
\frac12\,\frac{p-2}{N+p}\left(
\max_{\ol{B_\ve(x)}}u+\min_{\ol{B_\ve(x)}}u\right).
\end{equation}
(The average of the minimum and the maximum will be referred to as the {\it min-max mean} of $u$.)
\par
That formula allowed them to prove that
$u$ is $p$-harmonic in the viscosity sense in $\Om$ if and only if
\begin{equation}
\label{eq:hf042}
u(x)=\mu_p^*(\ve,u)+
o(\ve^2)\quad\text{as \,}\ve\to 0,
\end{equation}
in the viscosity sense for every $x\in\Om$ (see Section \ref{sec:elliptic} for the relevant definitions), thus obtaining an (AMVP) for $p$-harmonic functions. It is also worth a mention that, for $N=2$ and small values of the parameter $p>1$, in \cite{LM} it is proved that the (AMVP) holds directly for weak
solutions of the $p$-Laplace equation, without the need to interpret the formula in the viscosity sense.
\par
Thus, the mean $\mu^*_p(\ve,u)$ is an example of the desired (nonlinear) statistical value mentioned above. By similar arguments, one can obtain an (AMVP) with the ball $B_\ve(x)$ replaced by the sphere $\PA B_\ve(x)$ simply by replacing $(N+2)/(N+p)$ and $(p-2)/(N+p)$ by the numbers
$N/(N+p-2)$ and $(p-2)/(N+p-2)$. 

\medskip

In the quest of extending this type of result to the case $p=1$, which is not covered by the choice \eqref{mean-mpr}, other kinds of means were proposed by several authors.
Here, we mention the ones considered by Hartenstine and Rudd in \cite{HR}, based on the {\it median}
of a function, \par
\begin{eqnarray}
\mu_{p}'(\ve,u)=\frac{1}{p}\,\med_{\PA B_\ve(x)} u+\frac{p-1}{2p}\left(
\min_{\ol{B_\ve(x)}}u+\max_{\ol{B_\ve(x)}}u\right), \label{svh}\\
\mu_p''(\ve,u)=\frac{2-p}{p}\,\med_{\PA B_\ve(x)} u+\frac{2(p-1)}{p}\,\dashint_{\PA B_\ve(x)}u(y)\,dS_y, \label{svr}
\end{eqnarray}
and that considered by  Kawohl, Manfredi and Parviainen in \cite{KMP},
\begin{equation}
\label{mean-kmp}
\mu_p^{**}(\ve,u)=\frac{N+1}{N+p}\,\operatorname{av}_\ve(u)(x)
+\frac12\,\frac{p-1}{N+p}\left(\min_{\overline{B_\ve(x)}}u+\max_{\overline{B_\ve(x)}}u\right),
\end{equation}
where
\begin{eqnarray*}
\label{def-av}
\operatorname{av}_\ve(u)(x)=\dashint_{L_\ve}u(x+y)\,dS_y, 
\end{eqnarray*}
$L_\ve=\{ y\in B_\ve(x): (y-x)\cdot\nu=0\}$ and
$$
 \nu=\nu_{x, \ve}\in\PA B_1(0) \ \mbox{ is such that } \ u(x+\ve\nu)=\min_{\overline{B_\ve(x)}}u.
$$
\par
Both $\mu_{p}'(\ve,u)$ and $\mu_{p}''(\ve,u)$ yield an (AMVP) for all the cases $1\le p\le\infty$, but {\it only when $N=2$}, and $\mu_{p}^{**}(\ve,u)$ produces an (AMVP) for any $1\le p\le\infty$ and $N\ge 2$.

\medskip

In this paper, for $1\le p\le\infty$, we propose one more mean that helps us to characterize --- in an intrinsic way --- $p$-harmonic functions by an (AMVP). Its definition was inspired by the simple remark that the median, the mean value and the min-max mean of a continuous function $u$
on a compact topological space $X$ equipped with a positive Radon measure $\nu$ 
are respectively the unique real values $\mu_p^X(u)$ that solve the variational problem
\begin{equation}
\label{var-probl}
\nr u-\mu_p^X(u)\nr_{L^p(X,\nu)}=\min_{\la\in\RR}\,\nr u-\la\nr_{L^p(X,\nu)},
\end{equation}
for $p=1, 2$, and $\infty$. Thus, it is natural to ask whether the solution of \eqref{var-probl} yields a characterization of viscosity solutions of $\De_p^n u=0$ by 
means of an (AMVP), \textit{for each fixed $1\le p\le\infty$}.
\par
Therefore, for each $1\le p\le\infty$,  we consider the {\it $p$-mean of $u$} in $B_\ve(x)$, that is the number defined as
\begin{equation}
\label{p-mean}
\mu_p(\ve,u)(x)=\mbox{the unique $\mu\in\RR$ satisfying \eqref{var-probl} with } X=\ol{B_\ve(x)}.
\end{equation}
The main result of this paper is the following characterization.
\begin{thm}
\label{th:main:visco} 
Let $1\le p\le\infty$ and let $\Om$ be an open subset of $\RR^N$.
For a function $u\in C(\Om)$ the following assertions are equivalent:
\begin{enumerate}[(i)]
\item
$u$ is a viscosity solution of $\Delta_p^n u=0$ in $\Om$;
\item
$u(x)=\mu_p(\ve,u)(x)+o(\ve^2)$ as $\ve\to 0$, in the viscosity sense for every $x\in\Om$. 
\end{enumerate}
\end{thm}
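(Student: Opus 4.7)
The plan is to establish the pointwise asymptotic expansion of $\mu_p(\ve,v)(x)$ for smooth $v$, and then translate it into the viscosity equivalence by exploiting the monotonicity and continuity of the $p$-mean. The key identity to prove is that, for $v\in C^2$ near $x$ with $\na v(x)\neq 0$ and $1<p<\infty$,
\begin{equation*}
\mu_p(\ve,v)(x)=v(x)+\frac{\De_p^n v(x)}{2(N+p)}\,\ve^2+o(\ve^2)\quad\text{as }\ve\to 0;
\end{equation*}
once this is available, the characterization of viscosity $p$-harmonic functions follows in the same spirit as in \cite{MPR2,KMP,HR}.

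For the main range $1<p<\infty$, the $p$-mean $\mu=\mu_p(\ve,v)(x)$ is characterized by the Euler equation
\begin{equation*}
\int_{B_\ve(x)}|v(y)-\mu|^{p-2}(v(y)-\mu)\,dy=0.
\end{equation*}
Setting $y=x+\ve z$, positing $\mu=v(x)+\al\ve+\be\ve^2+o(\ve^2)$, and Taylor expanding
\begin{equation*}
v(x+\ve z)=v(x)+\ve\,\na v(x)\cdot z+\tfrac{\ve^2}{2}\,z^T\!\na^2 v(x)\,z+o(\ve^2),
\end{equation*}
I factor $\ve^{p-1}$ out of the integrand. The leading-order equation
\begin{equation*}
\int_{B_1(0)}|\na v(x)\cdot z-\al|^{p-2}(\na v(x)\cdot z-\al)\,dz=0
\end{equation*}
is strictly decreasing in $\al$ and vanishes at $\al=0$ by the symmetry $z\mapsto-z$, so $\al=0$. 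Expanding to the next order yields
\begin{equation*}
\be=\frac{\int_{B_1(0)}|\na v(x)\cdot z|^{p-2}\,z^T\!\na^2v(x)\,z\,dz}{2\int_{B_1(0)}|\na v(x)\cdot z|^{p-2}\,dz},
\end{equation*}
and a rotation sending $\na v(x)$ to $|\na v(x)|\,e_1$ reduces the quotient to explicit one-dimensional Beta-function integrals. Using the identity $\De_p^n v=\De v+(p-2)\De_\infty^n v$, a short computation gives $\be=\De_p^n v(x)/(2(N+p))$.

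For $p=\infty$, $\mu_\infty(\ve,v)$ is the min-max mean of $v$ on $\ol{B_\ve(x)}$, and the expansion follows from Taylor-expanding $v$ at its extrema, located near $x\pm\ve\,\na v(x)/|\na v(x)|$. For $p=1$, $\mu_1(\ve,v)$ is the median; lacking a first-order variational equation, the coefficient $\De_1^n v(x)/(2(N+1))$ is extracted geometrically, by imposing equal volume for $\{v\leq\mu\}\cap B_\ve(x)$ and $\{v\geq\mu\}\cap B_\ve(x)$ and applying the implicit function theorem to the smooth level set $\{v=v(x)\}$ near $x$. With the expansion in hand, the equivalence (i)$\Leftrightarrow$(ii) follows by standard test-function arguments: for $\phi\in C^2$ touching $u$ from above at $x_0$, monotonicity gives $\mu_p(\ve,u)(x_0)\leq\mu_p(\ve,\phi)(x_0)$, and the expansion applied to $\phi$ translates the viscosity AMVP into the sign condition $\De_p^n\phi(x_0)\geq 0$; the symmetric argument for touching from below, combined with the continuity of $\mu_p(\ve,\cdot)$, closes the loop.

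The main obstacle is controlling the $o(\ve^2)$ remainder uniformly: for $1<p<2$ the weight $|\na v(x)\cdot z|^{p-2}$ is integrable but singular along $\{\na v(x)\cdot z=0\}$, and its normalizing integral degenerates as $|\na v(x)|\to 0$, so the expansion is genuinely local and must be restricted to the non-critical case $\na v(x)\neq 0$; this forces the usual restricted class of test functions in the viscosity definition of the normalized $p$-Laplacian at critical points. The most delicate step is the $p=1$ expansion, which replaces the Euler-equation argument by a careful geometric description of $\{v=\mu\}\cap B_\ve(x)$ and an implicit-function-theorem computation at the level set $\{v=v(x)\}$.
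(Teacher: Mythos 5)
Your overall plan --- prove an asymptotic expansion $\mu_p(\ve,v)(x)=v(x)+\tfrac{\De_p^n v(x)}{2(N+p)}\ve^2+o(\ve^2)$ at non-critical points and then run the standard viscosity test-function argument --- is the right one, and your leading-order ($\al=0$ by symmetry) and second-order ($\be$) formulas match the paper. However, there is a genuine gap in the way you propose to establish the expansion. You plug the Taylor expansion of $v$ directly into the Euler equation $\int_{B_\ve(x)}|v-\mu|^{p-2}(v-\mu)\,dy=0$ and match powers of $\ve$ ``formally''; for $1<p<2$ the integrand involves $h(s)=|s|^{p-2}s$, whose derivative blows up on $\{\na v(x)\cdot z=0\}$, so passing the $o(\ve^2)$ Taylor remainder through $h$ is exactly the step that is hard to justify. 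You flag this obstacle at the end, but do not actually close it.

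The paper circumvents this by a two-step structure that you are missing. Step one (Lemma~\ref{lm:qp}) proves the expansion only for a \emph{quadratic polynomial} $q$; there the Euler equation rewrites exactly as a finite difference of $h$, one divides by $\ve$, and a generalized dominated convergence argument (with the a priori bound on $\de_\ve$ obtained from the mean-value form of the equation) identifies the limit $\de_0$ without ever touching a Taylor remainder inside $h$. Step two (Theorem~\ref{thm:p=p}) passes from $q$ to a general $u\in C^2$ using precisely the two structural properties singled out in Section~\ref{sec:properties}: since $|u_\ve-q_\ve|<\eta\ve^2$ on $\bar B$, \emph{monotonicity} of $\mu_p$ plus the \emph{lift-by-constant} property (Proposition~\ref{prop:lift&homogeneity}) squeeze $\mu_p(\ve,u)$ between $\mu_p(\ve,q)\mp\eta\ve^2$, and the expansion carries over. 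This is where monotonicity and continuity do their real work. In your write-up, by contrast, monotonicity appears only in the final viscosity step (to compare $\mu_p(\ve,u)$ with $\mu_p(\ve,\phi)$), which is not needed at all with the paper's definitions: the viscosity (AMVP) is phrased there directly in terms of $\mu_p(\ve,\phi)$, so Theorem~\ref{th:main:visco} follows from applying the expansion to $\phi$ and reading off the sign of $\De_p^n\phi(x)$. In short, you have the right formulas and the right ending, but you should invoke monotonicity and continuity earlier, to reduce the expansion to the case of a quadratic; as it stands your derivation of the expansion for $1<p<2$ is not rigorous.
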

\noindent
As a by-product, this theorem confirms the (AMVP) for $\mu'_p(\ve,u)$ and $\mu^*_p(\ve,u)$ for the case $p=1$ in {\it any dimension} $N\ge 2$.  
\par
Theorem \ref{th:main:visco} is based on the asymptotic formula
$$
\mu_p(\ve,v)(x)=v(x)+\frac12\,\frac{\De_p^n v(x)}{N+p}\,\ve^2+o(\ve^2) \ \mbox{ as } \ \ve\to 0,
$$
that holds for any smooth function $v$ such that $\na v(x)\not=0$.
\par
\par
We mention in passing that the mean $\mu_p^X(u)$ has also be considered in \cite{FFGS}, when $p\ge 2$ and $N=2$, 
when $X$ is a finite set and
$\nu$ is the counting measure  and has proved  to be effective in the numerical approximation of the operator $\De_p^n$.  Another type of (AMVP) has been proved in \cite{GS} for $N=2$ and $1<p<\infty$;
 however, the mean considered there, besides the values of the function $u$ on $B_\ve(x)$, also depends on the value of $\na u$ at $x$. 
\par
Compared to the means defined in \eqref{mean-mpr}, \eqref{svh}, \eqref{svr}, and \eqref{mean-kmp} (and that in \cite{GS}),
$\mu_p(\ve,u)$ has a drawback, since it cannot be defined explicitly, unless $p=1, 2, \infty$. However,
it has useful properties that those means do not always have and are the consequences of the fact that 
$\mu_p(\ve,u)$ is the {\it projection} of $u$ on the linear sub-space of $L^p(B_\ve(x))$
of the constant functions. As a matter of fact, we shall show that the functional 
$L^p(B_\ve(x))\ni u\mapsto\mu_p(\ve,u)(x)\in\RR$ is {\it continuous} in the corresponding $L^p$-topology
and {\it monotonic}, in the sense that 
$$
u\le v \mbox{ pointwise implies that } \mu_p(\ve,u)(x)\le\mu_p(\ve,v)(x). 
$$
Notice that the functionals defined by $\mu_p'\ve,u)$ and $\mu_p^{**}(\ve,u)$ are always monotonic, but
never continuous for $p\in (1,\infty)\setminus\{ 2\}$, while those defined by $\mu_p^*(\ve,u)$ and $\mu_p''(\ve,u)$ are not always monotonic (the former for $p>2$, the latter for $1<p<2$) and
never continuous for $p\in (1,\infty)\setminus\{ 2\}$, due to the presence of the min-max mean
in their definition.
\par
We shall see that the properties of continuity and monotonicity play an essential role in the proof of 
Theorem \ref{th:main:visco}, since they allow to reduce the argument to the simpler case of a {\it quadratic polynomial} (see Lemma \ref{lm:qp} and Theorem \ref{thm:p=p}).

\medskip

With a few technical adjustments, it is not difficult to treat the case of the parabolic $p$-Laplace operator. It is just the matter of replacing the euclidean ball and the Lebesgue measure by a suitable measure space. The appropriate choice is  the so-called {\it heat ball},
$$
E_\ve(x,t)=\left\{ (y,s)\in\RR^{N+1}: s<t,\, \Phi(x-y,t-s)>\ve^{-N}\right\},
$$
where 
$$
\Phi(y,s)=
(4\pi s)^{-N/2} e^{-\frac{|y|^2}{4s}} \,\cX_{(0,\infty)}(s) \ \mbox{ for } \ (y,s)\in\RR^N\times(-\infty, \infty)
$$
is the fundamental solution for the heat equation, equipped with the space-time measure
$$
d\nu(y,s)=\frac{|x-y|^2}{(t-s)^2}\,dy\,ds.
$$
\par
Thus, by arguing in a similar spirit, we  shall consider the value $\pi_p(\ve,u)(x,t)$ as the unique solution  of the var\-i\-a\-tion\-al problem
\begin{equation}
\label{var-probl-heat}
\nr u-\pi_p(\ve,u)(x,t)\nr_{L^p(E_\ve(x,t), \nu)}=
\min_{\la\in\RR}\,
\nr u-\la\nr_{L^p(E_\ve(x,t), \nu)},
\end{equation}
Notice that the value $\pi_p(\ve,u)(x,t)$ in \eqref{var-probl-heat} can be easily computed for $p=2$ as the {\it caloric mean value} of $u$, for which a classical mean value property holds true for solutions of the heat equation (\cite{E}[pp. 52-54]). If we define the space-time cylinder $\Om_T=\Om\times(0,T)$, 
we can prove the following companion of Theorem \ref{th:main:visco} .

\begin{thm}
\label{th:pa-visco} 
Let $1\le p\le\infty$. For a function $u\in C(\Om_T)$, the following assertions are equivalent:
\begin{enumerate}[(i)]
\item
$u_t=\frac{N}{N+p-2}\,\Delta_p^n u$  in $\Om_T$ in the viscosity sense;
\item
$u(x,t)=\pi_p(\ve,u)(x,t)+o(\ve^2)$ as $\ve\to 0$ in the viscosity sense
for every $(x,t)\in\Om_T$. 
\end{enumerate}
\end{thm}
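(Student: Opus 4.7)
\medskip

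\noindent
\textbf{Proof plan for Theorem \ref{th:pa-visco}.} The strategy is to adapt the proof of Theorem \ref{th:main:visco} to the space--time setting by substituting the Euclidean ball and Lebesgue measure with the heat ball $E_\ve(x,t)$ endowed with the weighted measure $d\nu$. The cornerstone is the following parabolic analogue of the expansion used for $\mu_p(\ve,v)$: for a smooth $v$ with $\nabla v(x,t)\ne 0$,
$$
\pi_p(\ve,v)(x,t)=v(x,t)+c_{N,p}\,\ve^2\!\left[\frac{N}{N+p-2}\,\De_p^n v(x,t)-v_t(x,t)\right]+o(\ve^2)\quad\text{as }\ve\to 0,
$$
with $c_{N,p}>0$. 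Once this is in hand, the equivalence between (i) and (ii) in the viscosity sense follows by the standard test-function argument already used for Theorem \ref{th:main:visco}.

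The first step is to rescale by $y=x+\ve z$, $s=t-\ve^2\tau$, under which $E_\ve(x,t)$ transforms into the fixed unit heat ball $E_1(0,0)=\{(z,\tau):\tau>0,\ (4\pi\tau)^{-N/2}e^{-|z|^2/(4\tau)}>1\}$ and $d\nu$ becomes $\ve^N\,|z|^2\tau^{-2}dz\,d\tau$ up to a harmless constant. The minimization \eqref{var-probl-heat} transfers to the fixed domain, and the Euler--Lagrange condition for the $L^p$ projection (which characterizes $\pi_p$) becomes
$$
\iint_{E_1(0,0)}|v(x+\ve z,t-\ve^2\tau)-\la|^{p-2}\bigl[v(x+\ve z,t-\ve^2\tau)-\la\bigr]\,\frac{|z|^2}{\tau^2}\,dz\,d\tau=0.
$$
Plugging in the Taylor expansion $v(x+\ve z,t-\ve^2\tau)=v(x,t)+\ve\,\na v(x,t)\cdot z-\ve^2 v_t(x,t)\tau+\tfrac{\ve^2}{2}\phantom{}^t\!z\nabla^2 v(x,t)z+o(\ve^2)$, writing $\la=v(x,t)+\ve^2 k+o(\ve^2)$, aligning coordinates so that $\na v(x,t)$ points along $e_N$, and using the spatial rotational symmetry of $E_1(0,0)$, one identifies $k$: the tangential part of $\nabla^2 v$ contributes the tangential Laplacian, the normal direction gives the $e_N$-aligned second derivative (exactly as in the elliptic argument), and the time Taylor term $-\ve^2 v_t \tau$ produces a term proportional to $v_t$.

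Monotonicity and continuity of $u\mapsto\pi_p(\ve,u)(x,t)$ follow verbatim from the arguments used for $\mu_p$, since $\pi_p$ is again the projection of $u$ onto the line of constants in $L^p(E_\ve(x,t),\nu)$. These two properties allow one to reduce the derivation of the asymptotic formula to the case of quadratic space--time polynomials, exactly as in Lemma \ref{lm:qp} and Theorem \ref{thm:p=p}; on such polynomials the Euler--Lagrange computation above can be carried out explicitly.

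The main obstacle is book-keeping for the $p$-dependence on the non-product domain $E_1(0,0)$: one must compute the one-dimensional moment integrals
$$
\iint_{E_1(0,0)}\frac{|z|^2}{\tau^2}\,dz\,d\tau,\qquad \iint_{E_1(0,0)} \tau\,\frac{|z|^2}{\tau^2}\,dz\,d\tau,\qquad \iint_{E_1(0,0)} z_j^2\,\frac{|z|^2}{\tau^2}\,dz\,d\tau,
$$
and verify that their ratio produces precisely the constant $N/(N+p-2)$ appearing in (i). The appearance of this constant is natural, since for $p=2$ the heat ball is calibrated to give the classical caloric mean value property, and the $p$-dependence enters only through the weighting in the anisotropic normal/tangential splitting governed by $\na v$, just as in the elliptic case. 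Once this computation is settled, the viscosity argument (touching $C^2$ test functions from above and below, perturbing by $\de|x|^2$ to avoid the degenerate set $\{\na v=0\}$) transfers from the elliptic setting without essential change.
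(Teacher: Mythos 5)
Your proposal is correct and follows essentially the same route as the paper, which itself declines to write out the proof of Theorem \ref{th:pa-visco} and declares it a ``straightforward re-adaptation'' of Theorem \ref{th:main:visco} once the parabolic asymptotic expansion (Lemma \ref{lm:qp-parabolic} via the moment integrals of Lemma \ref{lem:integrals-parabolic}, giving $c_{N,p}=\frac{1}{4\pi}\bigl(1-\frac{2}{N+p}\bigr)^{1+\frac{N+p}{2}}>0$), the monotonicity/continuity of $\pi_p$, and the parabolic viscosity definitions are in place; you identify exactly these ingredients. Two small remarks: the paper's viscosity definitions already exclude test functions with $\nabla\phi(x,t)=0$, so the auxiliary $\delta|x|^2$ perturbation you suggest is unnecessary here, and the Euler--Lagrange characterization you write only governs $1\le p<\infty$, so the case $p=\infty$ needs the separate min-max/Lagrange-multiplier argument as in Lemma \ref{lm:qp-parabolic}.
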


\medskip

For further developments and applications of  (AMVP)'s, we refer
the reader to \cite{AL}, \cite{JLM}, \cite{LM}, \cite{LPS},  
\cite{MPR3}, \cite{RV},  and references therein. 

\medskip

This paper is organized as follows. In Section \ref{sec:properties}, 
we derive the pertinent properties of the $p$-mean value of a continuous function $u$: continuity and monotonicity will be the most important.  
Then, we shall prove Theorem \ref{th:main:visco} and Theorem \ref{th:pa-visco}  in Sections \ref{sec:elliptic} and \ref{sec:parabolic}, respectively. Finally, Section \ref{sec:computations} is
devoted to the calculation of some relevant integrals.

\section{Properties of $p$-mean values}
\label{sec:properties}

Let $X$ be a compact topological space which is also a measure space with respect to a positive Radon measure $\nu$ such that $\nu(X)<\infty$.  
We recall that, if $u\in C(X)$, the {\it median} $\med\limits_X u$ of $u$ in $X$ is defined as the unique solution $\la$ of the equation
\begin{equation}
\label{def-med}
\nu(\{ y\in X: u(y)\ge\la\})=\nu(\{ y\in X: u(y)\le\la\}).
\end{equation}
 
We start by showing that the definitions \eqref{p-mean} and \eqref{var-probl-heat} of $\mu_p(\ve, u)$ and $\pi_p(\ve,u)$ are well posed.

\begin{thm}
\label{thm:characterization}
Let $1\le p\le\infty$ and $u\in C(X)$. There exists a unique real value $\mu^X_p(u)$ such that
$$
\nr u-\mu^X_p(u)\nr_{L^p(X),\nu}=\min_{\la\in\RR}\nr u-\la\nr_{L^p(X,\nu)}.
$$
In particular, 
\begin{eqnarray*}
&\displaystyle \mu_1^X(u)=\med_X u, \qquad \mu_2^X(u)=\dashint_X u(y)\,d\nu \\ 
&\displaystyle \mbox{ and } \ \mu_\infty^X(u)=
\frac12\left(\min_X u+\max_X u\right).
\end{eqnarray*}
\par
Furthermore, for $1\le p<\infty$, $\mu_p^X(u)$ is characterized by the equation
\begin{equation}
\label{eq:characterization}
\int_X \left|u(y)-\mu_p^X(u)\right|^{p-2}\left[u(y)-\mu_p^X(u)\right]\,d\nu=0,
\end{equation}
where, for $1\le p<2$, we mean that the integrand is zero if $u(y)-\mu_p^X(u)=0$.
\end{thm}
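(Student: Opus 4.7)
The plan is to view $F_p(\la)\defeq\nr u-\la\nr_{L^p(X,\nu)}$ as a convex, continuous, coercive function on $\RR$, and to analyze it case by case. First I would establish existence for every $1\le p\le\infty$ uniformly: convexity of $F_p$ follows from Minkowski's inequality applied to the decomposition $u-\theta\la_1-(1-\theta)\la_2=\theta(u-\la_1)+(1-\theta)(u-\la_2)$; continuity in $\la$ follows from dominated convergence for $p<\infty$ and from $|F_\infty(\la_1)-F_\infty(\la_2)|\le|\la_1-\la_2|$ for $p=\infty$; coercivity follows from
$$F_p(\la)\ge|\la|\,\nu(X)^{1/p}-\nr u\nr_{L^p(X,\nu)}$$
(interpreting $\nu(X)^{1/\infty}=1$). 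A continuous convex coercive function on $\RR$ attains its minimum on a nonempty closed (possibly degenerate) interval, which supplies existence.

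For uniqueness and the characterization \eqref{eq:characterization} in the range $1<p<\infty$, I would pass to $g_p(\la)\defeq F_p(\la)^p=\int_X|u-\la|^p\,d\nu$. The map $t\mapsto|t|^p$ is strictly convex when $p>1$, and since $u(y)-\la_1\neq u(y)-\la_2$ pointwise whenever $\la_1\neq\la_2$, this strict convexity is preserved after integration; hence $g_p$ has a unique minimizer, which coincides with the unique minimizer of $F_p$. Since $t\mapsto|t|^p$ is $C^1$ with derivative $p|t|^{p-2}t$ (understood as $0$ at $t=0$ when $p>1$), differentiating under the integral via dominated convergence gives $g_p'(\la)=-p\int_X|u-\la|^{p-2}(u-\la)\,d\nu$, and setting this to zero yields \eqref{eq:characterization}.

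It remains to handle the three distinguished exponents. For $p=2$, expanding $g_2(\la)=\int u^2\,d\nu-2\la\int u\,d\nu+\la^2\nu(X)$ shows the minimum is uniquely attained at $\la=\dashint_X u\,d\nu$. For $p=\infty$, continuity of $u$ on the compact set $X$ gives $M=\max u$ and $m=\min u$, whence $F_\infty(\la)=\max(M-\la,\la-m)$ for $\la\in[m,M]$ and is strictly larger outside, so the unique minimizer is the crossing $\la=\frac12(m+M)$. The hardest step — and the main obstacle — is $p=1$: $g_1=F_1$ is only piecewise affine in flat regions of $u$, so strict convexity fails. Here I would compute the one-sided derivatives
$$F_1'_+(\la)=\nu(\{u\le\la\})-\nu(\{u>\la\}),\qquad F_1'_-(\la)=\nu(\{u<\la\})-\nu(\{u\ge\la\}),$$
so convexity characterizes the minimizers as the $\la$ with $F_1'_-(\la)\le 0\le F_1'_+(\la)$, equivalently $\nu(\{u\ge\la\})\ge\nu(X)/2$ and $\nu(\{u\le\la\})\ge\nu(X)/2$. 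Combining these with the equality \eqref{def-med} (whose solution is unique by hypothesis) identifies this minimizer with $\med_X u$, and the characterization \eqref{eq:characterization} reads $\nu(\{u>\mu\})=\nu(\{u<\mu\})$ once one adopts the stated convention that $|u-\mu|^{p-2}(u-\mu)$ vanishes where $u=\mu$.
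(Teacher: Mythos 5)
Your proof is correct, but it takes a genuinely different and more self-contained route than the paper's. For $1<p<\infty$, the paper disposes of existence, uniqueness, and \eqref{eq:characterization} in one stroke by viewing $\mu_p^X(u)$ as the projection of $u$ onto the closed subspace $\La$ of constants in the uniformly convex space $L^p(X,\nu)$, then invoking the projection theorem together with the differentiability of $\la\mapsto\nr u-\la\nr_{L^p}$; you instead carry out a direct calculus-of-variations argument on $g_p(\la)=\int_X|u-\la|^p\,d\nu$, checking strict convexity, coercivity, $C^1$-smoothness, and differentiating under the integral. Your route avoids the machinery of uniform convexity entirely and produces \eqref{eq:characterization} by an explicit Euler--Lagrange computation rather than abstract duality. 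For $p=1$ the difference is more pronounced: the paper simply defers to \cite{N}, \cite{Sh}, \cite{S}, whereas you actually compute the one-sided derivatives of $F_1$ and derive the subdifferential condition $\nu(\{u\ge\la\})\ge\nu(X)/2$, $\nu(\{u\le\la\})\ge\nu(X)/2$, from which the identification with $\med_X u$ follows (given the uniqueness of the solution of \eqref{def-med}). Two small remarks: the continuity of $F_p$ is already immediate from your own Minkowski bound $|F_p(\la_1)-F_p(\la_2)|\le|\la_1-\la_2|\,\nu(X)^{1/p}$, so you need not separately invoke dominated convergence; and the final step of your $p=1$ argument deserves one more sentence --- the minimizer set of $F_1$ is a closed interval $[\la_-,\la_+]$, and if $\la_-<\la_+$ one checks that \emph{every} $\la\in(\la_-,\la_+)$ solves \eqref{def-med}, contradicting the postulated uniqueness of the median; this is what pins down $\la_-=\la_+=\med_X u$. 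Note that both you and the paper treat the existence and uniqueness of the median solving \eqref{def-med} as given (it can fail for general compact $X$ and Radon $\nu$, but holds for the Euclidean balls and heat balls actually used), so your proof carries exactly the same dependency as the original.
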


\begin{proof} 
The case $p=1$ is a straightforward extension of the proofs in \cite{N}, \cite{Sh} and \cite{S}.
\par
If $p=\infty$, the assertion follows at once by observing that
$$
\max_X|u-\la|=\max\left(\max_X u-\la,\,\la-\min_X u\right). 
$$
\par
Next, in the case $1<p<\infty$, we observe that
$$
\min_{\la\in\RR}\nr u-\la\nr_{L^p(X,\nu)}=\min_{v\in\La}\nr u-v\nr_{L^p(X,\nu)},
$$
where $\La$ is the subspace of constant functions on $X$;
in other words $\mu_p^X(u)$ is a projection of $u$ on $\La$.
Thus, the existence, uniqueness and characterization of $\mu_p^X(u)$ are guaranteed by the theorem of the projection, since $L^p(X,\nu)$ is uniformly convex and $\La$ is a closed subspace, and the differentiability of the function $\la\mapsto \nr u-\la\nr_{L^p(X,\nu)}$ (see \cite{LL}).
\par
The expression of $\mu_2^X(u)$ is readily computed as the minimum point of a quadratic polynomial.
\end{proof}

\begin{rem}
\rm 
Note that, for $1<p\le\infty$, Theorem \ref{thm:characterization} extends to the case in which $u\in L^p(X,\nu)$, provided the minimum and the maximum are replaced by 
$$
\essinf_{X} u \ \mbox{ and } \ \esssup_{X} u.
$$
\par
If $u\in L^1(X,\nu)\setminus C(X)$, it is known that the median of $u$ in $X$ may not be unique (see \cite{N}). 
\end{rem}

\medskip

The following corollary will be very useful for further computations. We set $B=B_1(0)$. 

\begin{cor}
\label{cor:characterization} 
Let $u\in L^p(B_\ve(x))$, for $1< p\le\infty$, and $u\in C(B_\ve(x))$, for $p=1$. 
\par
If we let $u_{\ve}(z)=u(x+\ve z)$ for $z\in\ol{B}$ and set
\begin{equation}
\label{def-mu}
\mu_p(\ve,u)(x)=\mu_p^{B_\ve(x)}(u),
\end{equation}
then it holds that
\begin{equation}
\label{rescaling}
\mu_p(\ve,u)(x)=\mu_p(1,u_{\ve})(0).
\end{equation}
\end{cor}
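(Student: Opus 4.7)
The plan is to reduce everything to a direct change of variables in the variational problem from Theorem \ref{thm:characterization}, exploiting the fact that the Lebesgue measure scales by a constant factor that does not depend on the parameter $\la$ over which we minimize.

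First, I would treat the case $1\le p<\infty$. Starting from the definition \eqref{def-mu}, for every $\la\in\RR$ I would apply the substitution $y=x+\ve z$ (with $dy=\ve^N\,dz$, and $y\in B_\ve(x)\Longleftrightarrow z\in B$) to get
\begin{equation*}
\nr u-\la\nr_{L^p(B_\ve(x))}^p=\int_{B_\ve(x)}|u(y)-\la|^p\,dy=\ve^N\int_B|u_\ve(z)-\la|^p\,dz=\ve^N\,\nr u_\ve-\la\nr_{L^p(B)}^p.
\end{equation*}
Taking $p$-th roots, the constant $\ve^{N/p}$ factors out and is independent of $\la$, so minimization in $\la$ of the left- and right-hand sides yields the same minimizer. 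Uniqueness from Theorem \ref{thm:characterization} then gives $\mu_p^{B_\ve(x)}(u)=\mu_p^{B}(u_\ve)$, which is exactly \eqref{rescaling}. Alternatively, one could use the Euler equation \eqref{eq:characterization}: the same substitution transforms $\int_{B_\ve(x)}|u-\mu|^{p-2}(u-\mu)\,dy=0$ into the corresponding identity for $u_\ve$ on $B$, and uniqueness of the solution closes the argument.

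For $p=\infty$, the analogous identity is even more direct, since
\begin{equation*}
\nr u-\la\nr_{L^\infty(B_\ve(x))}=\sup_{y\in B_\ve(x)}|u(y)-\la|=\sup_{z\in B}|u_\ve(z)-\la|=\nr u_\ve-\la\nr_{L^\infty(B)},
\end{equation*}
so the two minimization problems coincide term-by-term and hence share the unique minimizer provided by Theorem \ref{thm:characterization}.

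I do not foresee any real obstacle: the statement is purely a scaling property of the underlying $L^p$-norm under an affine change of variables, and the uniqueness of $\mu_p^X$ (already established) lets us conclude without having to worry about selecting among several minimizers. The only small care needed is the assumption $u\in C(B_\ve(x))$ when $p=1$, which guarantees that the median of $u_\ve$ on $B$ is likewise well defined, so that Theorem \ref{thm:characterization} applies to both sides of \eqref{rescaling}.
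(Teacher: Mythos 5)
Your proof is correct and follows essentially the same route as the paper: a change of variables $y=x+\ve z$ shows that $\nr u-\la\nr_{L^p(B_\ve(x))}$ equals $\ve^{N/p}\nr u_\ve-\la\nr_{L^p(B)}$ (and the two $L^\infty$-norms coincide), so the minimizers agree by the uniqueness in Theorem \ref{thm:characterization}. The extra remark via the Euler equation \eqref{eq:characterization} is a harmless alternative but not needed.
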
 
\begin{proof}
It suffices to observe that, for every $\la\in\RR$, it holds that
$$
\nr u-\la\nr_{L^p(B_\ve(x))}=\ve^{N/p}\,\nr u_\ve-\la\nr_{L^p(B)},
$$
for $1\le p<\infty$, and 
$$
\nr u-\la\nr_{L^\infty(B_\ve(x))}=\nr u_\ve-\la\nr_{L^\infty(B)},
$$
and hence invoke the uniqueness part of Theorem \ref{thm:characterization}.
\end{proof}

In the next two theorems we regard $\mu_p^X(u)$ as the value at $u$ of a functional $\mu_p^X$ on
$L^p(X)$. If $p=1$ and $u\in L^1(X)\setminus C(X)$, we allow $\mu_1^X(u)$ to be any minimizing value 
of $\la\mapsto\nr u-\la\nr_{L^1(X)}$ on $\RR$, whenever it is convenient.
\begin{thm}[Continuity]
\label{thm:continuity}
Let $1\le p\le\infty$. It holds that  
\begin{equation}
\label{triangle}
\left|\left\nr u-\mu^X_p(u)\right\nr_{L^p(X)}-\left\nr v-\mu^X_p(v)\right\nr_{L^p(X)}\right|\le \nr u-v\nr_{L^p(X)},
\end{equation}
for any $u, v\in L^p(X)$.
\par
Moreover, if $u_n\to u$ in $L^p(X)$ for $1\le p\le\infty$ and $u_n, u\in C(X)$ for $p=1$, then $\mu^X_p(u_n)\to \mu^X_p(u)$ as $n\to\infty$.  
\par
In particular, the same conclusion holds for any $p\in[1,\infty]$, if $\{u_n\}_{n\in\NN}\subset C(X)$ converges to $u$ uniformly on $X$ as $n\to\infty$.
\end{thm}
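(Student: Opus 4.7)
The plan is to first establish the 1-Lipschitz bound \eqref{triangle} by a triangle-inequality-plus-minimality argument, and then deduce convergence of the minimizers via a boundedness/subsequence/uniqueness scheme. For \eqref{triangle}, I would start from the observation that, for every fixed $\la\in\RR$, the reverse triangle inequality applied to $u-\la$ and $v-\la$ yields
$$
\bigl|\nr u-\la\nr_{L^p(X)}-\nr v-\la\nr_{L^p(X)}\bigr|\le\nr u-v\nr_{L^p(X)}.
$$
Choosing $\la=\mu_p^X(v)$ and combining this with the minimality of $\mu_p^X(u)$ gives
$$
\nr u-\mu_p^X(u)\nr_{L^p(X)}\le\nr u-\mu_p^X(v)\nr_{L^p(X)}\le\nr v-\mu_p^X(v)\nr_{L^p(X)}+\nr u-v\nr_{L^p(X)};
$$
the matching lower bound follows by exchanging $u$ and $v$.

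For the convergence statement, set $\mu_n=\mu_p^X(u_n)$ and $\mu=\mu_p^X(u)$. The bound \eqref{triangle} immediately yields $\nr u_n-\mu_n\nr_{L^p(X)}\to\nr u-\mu\nr_{L^p(X)}$. I would then check that $\{\mu_n\}\subset\RR$ is bounded: since $\nu(X)<\infty$, for $p<\infty$ one has
$$
|\mu_n|\,\nu(X)^{1/p}=\nr\mu_n\nr_{L^p(X)}\le\nr u_n\nr_{L^p(X)}+\nr u_n-\mu_n\nr_{L^p(X)},
$$
and both right-hand terms are bounded in $n$; the case $p=\infty$ is analogous with essential bounds. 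Extract a subsequence $\mu_{n_k}\to\mu^*\in\RR$. For every $\la\in\RR$, the inequality $\nr u_{n_k}-\mu_{n_k}\nr_{L^p(X)}\le\nr u_{n_k}-\la\nr_{L^p(X)}$ passes to the limit, because $(w,\la)\mapsto\nr w-\la\nr_{L^p(X)}$ is jointly continuous on $L^p(X)\times\RR$ (again thanks to $\nu(X)<\infty$), giving $\nr u-\mu^*\nr_{L^p(X)}\le\nr u-\la\nr_{L^p(X)}$ for all $\la$. The uniqueness part of Theorem \ref{thm:characterization} then forces $\mu^*=\mu$, and the subsequence principle yields $\mu_n\to\mu$. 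The final sentence of the theorem is then automatic, since uniform convergence on the finite-measure space $(X,\nu)$ implies $L^p$-convergence for every $p\in[1,\infty]$.

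The delicate point is the $p=1$ case: uniqueness of the minimizer is available only under a continuity assumption on $u$, and this is precisely why the statement requires $u_n,u\in C(X)$ when $p=1$. Without continuity, the subsequence limit $\mu^*$ could land anywhere in the (possibly non-degenerate) interval of $L^1$-minimizers, and one could not single out $\mu$; the continuity hypothesis is exactly what is needed to close the argument via Theorem \ref{thm:characterization}.
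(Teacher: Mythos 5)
Your proof is correct and follows essentially the same route as the paper's: establish the 1-Lipschitz estimate \eqref{triangle} (the paper phrases it as the observation that $\nr u-\mu_p^X(u)\nr_{L^p(X)}$ is the distance of $u$ from the closed subspace $\Lambda$ of constants, which is exactly your triangle-inequality-plus-minimality computation), deduce convergence of the optimal values, and then pass to the minimizers via a subsequence-plus-uniqueness argument. The paper's version of the last step is quite terse --- it simply asserts that any converging subsequence of $\{\mu_p^X(u_n)\}$ must converge to $\mu_p^X(u)$ --- whereas you supply the details it leaves implicit (boundedness of $\{\mu_n\}$ from $\nu(X)<\infty$, extraction of a convergent subsequence, joint continuity of $(w,\la)\mapsto\nr w-\la\nr_{L^p(X)}$, and the appeal to uniqueness from Theorem \ref{thm:characterization}); this is a worthwhile expansion rather than a deviation.
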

\begin{proof}
The inequality \eqref{triangle} simply follows by observing that 
$
\left\nr u-\mu^X_p(u)\right\nr_{L^p(X)}
$
is nothing else than the distance of $u$ from the subspace $\La$.
\par
Next, if  $u_n\to u$ in $L^p(X)$ as $n\to\infty$, \eqref{triangle} implies that
$$
\nr u_n-\mu_p^X(u_n)\nr_{L^p(X)}\to \nr u-\mu_p^X(u)\nr_{L^p(X)} \ \mbox{ as } \ n\to\infty.
$$
We conclude by observing that, since $\mu^X_p(u)$ unique for $1<p\le\infty$ and for $p=1$ if $u\in C(X)$, any converging sub-sequence of $\{\mu^X_p(u_n)\}_{n\in\NN}$ must 
converge to $\mu^X_p(u)$.
\end{proof}

\begin{thm}[Monotonicity]
\label{thm:monotonicity}
Let $u$ and $v$ be two functions in $L^p(X)$, for $1<p\le\infty$, or in $C(X)$ for $p=1$. 
\par
If $u\le v$ a.e. on $L^p(X)$, then 
$\mu^X_p(u)\le\mu^X_p(v)$.
\end{thm}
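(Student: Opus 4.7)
The plan is to exploit the first-order characterization of $\mu_p^X$ provided by Theorem \ref{thm:characterization}, which reduces monotonicity to the monotonicity of a single scalar function of $\la$. The endpoint cases $p=1$ and $p=\infty$ are handled directly from the explicit formulas, while the main range $1<p<\infty$ uses \eqref{eq:characterization}.

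For $p=\infty$ the inequality $u\le v$ (a.e.) immediately gives $\essinf_X u\le\essinf_X v$ and $\esssup_X u\le\esssup_X v$, hence $\mu_\infty^X(u)\le\mu_\infty^X(v)$. For $p=1$ I would introduce
$$
F_w(\la)=\nu(\{w\ge\la\})-\nu(\{w\le\la\}),\qquad w\in C(X),
$$
which is non-increasing in $\la$ and vanishes at $\med_X w$ by \eqref{def-med}. Since $u\le v$ yields $\{u\ge\la\}\subseteq\{v\ge\la\}$ and $\{v\le\la\}\subseteq\{u\le\la\}$, we have $F_u(\la)\le F_v(\la)$ for every $\la$. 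Evaluating at $\mu=\med_X u$ gives $F_v(\mu)\ge 0$; since $F_v$ is non-increasing and changes sign at $\med_X v$, this forces $\med_X v\ge\mu$.

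For $1<p<\infty$ I would set
$$
F_w(\la)=\int_X|w-\la|^{p-2}(w-\la)\,d\nu,
$$
so that by \eqref{eq:characterization} the number $\mu_p^X(w)$ is the unique zero of $F_w$. Since the scalar function $t\mapsto|t|^{p-2}t=\operatorname{sgn}(t)\,|t|^{p-1}$ is continuous and strictly increasing on $\R$, the assumption $u\le v$ yields $F_u(\la)\le F_v(\la)$ pointwise in $\la$, while at the same time $F_v$ is itself strictly decreasing in $\la$. Writing $\mu=\mu_p^X(u)$ and $\tilde\mu=\mu_p^X(v)$, one obtains $F_v(\mu)\ge F_u(\mu)=0=F_v(\tilde\mu)$, so the strict monotonicity of $F_v$ forces $\mu\le\tilde\mu$.

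I do not expect any real obstacle. The only mildly delicate point is the interpretation of the integrand of $F_w$ at points where $w=\la$ when $1<p<2$; this is already settled by the convention adopted in \eqref{eq:characterization}, and is consistent with the continuity of $t\mapsto\operatorname{sgn}(t)\,|t|^{p-1}$ across $t=0$, which preserves the strict monotonicity needed above.
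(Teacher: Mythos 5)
Your proposal is correct and follows essentially the same approach as the paper: the endpoint cases $p=1,\infty$ are handled via the explicit formulas/the measure-theoretic characterization, and the range $1<p<\infty$ is handled by noting that the integrand in \eqref{eq:characterization} is monotone in $u$ and in $\la$. You spell out the final implication (that $F_v$ being non-increasing with unique zero forces the order of the zeros) a bit more explicitly than the paper does, but the logic is identical.
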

\begin{proof}
For $1<p<\infty$, we observe that the function $F:\RR\times\RR\to \RR$ defined by
$$
F(u,\la)=|u-\la|^{p-2} (u-\la)
$$
is increasing in $u$ for fixed $\la$ and decreasing in $\la$ for fixed $u$; hence,
$$
\int_X |u(y)-\la|^{p-2} [u(y)-\la]\,d\nu_y\le\int_X |v(y)-\la|^{p-2} [v(y)-\la]\,d\nu_y
$$
if $u\le v$ a.e. in $X$.  The characterization \eqref{eq:characterization} then yields that 
$\mu^X_p(u)\le\mu_p^X(v)$.
\par
Next, we know that
$$
\mu^X_\infty(u)=\frac12\left\{\essinf_X u+\esssup_X u\right\};
$$
thus, the conclusion follows by an inspection.
\par
Finally, we know that $\mu^X_1(u)$ is the unique zero of the function defined by
\begin{equation*}
\label{def-F_u}
F_u(\la)=\nu(\{ y\in X: u(y)\ge\la\})-\nu(\{ y\in X: u(y)\le\la\}), \ \la\in\RR.
\end{equation*}
The conclusion then follows by observing that $F_u\le F_v$.
\end{proof}

\begin{rem}
\rm
Notice that the mean $\mu^{mpr}_p(\ve,u)$ in \eqref{mean-mpr} 
is not monotonic when $1\le p<2$ and is continuous in $L^p(B_\ve(x))$ only for
$p=2, \infty$.
\par
The mean $\mu^{hr,2}_p(\ve,u)$ in \eqref{svr} is not monotonic for $2<p<\infty$ and
the mean $\mu^{hr,1}_p(\ve,u)$ in \eqref{svh} is not continuous unless $p=\infty$.
\par
Finally, the mean $\mu_p^{kmp}(\ve,u)$ in \eqref{mean-kmp} is not continuous unless $p=\infty$. 
\par
To disprove continuity, it is sufficient to take the sequence of
functions $u_n(y)=(|y-x|/\ve)^n$ for $y\in B_\ve(x)$: this converges to zero in $L^p(B_\ve(x))$
for $1\le p<\infty$, but the average of its maximum and minimum is always $1/2$.
\end{rem}
\par
The proof of following proposition is straightforward. 
\begin{prop}
\label{prop:lift&homogeneity}
We have that
\begin{enumerate}[(i)]
\item
$\mu^X_p(u+c)=c+\mu^X_p(u)$ for every $c\in\RR$;
\item
$\mu^X_p(\al u)=\al\,\mu^X_p(u)$ for every $\al\in\RR$.
\end{enumerate}
\end{prop}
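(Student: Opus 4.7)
The plan is to derive both identities directly from the defining variational property, by a change of the dummy variable $\lambda$, and to appeal to the uniqueness clause of Theorem \ref{thm:characterization} to identify the minimizers.

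For (i), the key is the pointwise identity $|(u+c)(y) - \lambda| = |u(y) - (\lambda - c)|$, which, after raising to the $p$-th power and integrating against $d\nu$ (or taking essential supremum when $p = \infty$), yields
\[
\nr (u+c) - \lambda\nr_{L^p(X,\nu)} = \nr u - (\lambda - c)\nr_{L^p(X,\nu)}
\]
for every $\lambda \in \RR$ and every $1 \le p \le \infty$. Since the affine map $\lambda \mapsto \lambda - c$ is a bijection of $\RR$, the right-hand side is minimized in $\lambda$ precisely when $\lambda - c$ equals the minimizer of $\nr u - \cdot\nr_{L^p(X,\nu)}$, namely $\mu_p^X(u)$. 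Uniqueness of the minimizer then forces $\mu_p^X(u+c) = c + \mu_p^X(u)$.

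For (ii), one performs the analogous manipulation $|\alpha u(y) - \lambda| = |\alpha|\,|u(y) - \lambda/\alpha|$ valid for $\alpha \ne 0$, giving
\[
\nr \alpha u - \lambda\nr_{L^p(X,\nu)} = |\alpha|\,\nr u - \lambda/\alpha\nr_{L^p(X,\nu)}.
\]
Scaling by the positive constant $|\alpha|$ leaves the location of the minimum unchanged, and $\lambda \mapsto \lambda/\alpha$ is again a bijection of $\RR$, so the unique minimizer satisfies $\lambda/\alpha = \mu_p^X(u)$, i.e., $\mu_p^X(\alpha u) = \alpha\,\mu_p^X(u)$. The case $\alpha = 0$ is immediate since the constant function $0$ is its own best approximation by constants.

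The only mildly delicate point is the case $p = 1$ with merely $L^1$ data, where the minimizer need not be unique; however, under the convention stated before Theorem \ref{thm:continuity} (allowing $\mu_1^X(u)$ to be any minimizing value), the bijections $\lambda \mapsto \lambda - c$ and $\lambda \mapsto \lambda/\alpha$ still transform the full set of minimizers of one side into those of the other, so both identities continue to hold as set-valued equalities, and \emph{a fortiori} for any consistent selection. No other obstacles arise.
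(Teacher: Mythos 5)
Your proof is correct, and since the paper itself omits the argument (remarking only that it is ``straightforward''), your write-up supplies exactly the intended change-of-variable-in-$\lambda$ argument together with an appeal to the uniqueness clause of Theorem \ref{thm:characterization}. The handling of the edge cases ($\alpha = 0$, and the possibly non-unique median when $p=1$ with merely $L^1$ data) is careful and complete.
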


\section{The (AMVP) for the elliptic case}
\label{sec:elliptic}

This section is devoted to prove Theorem \ref{th:main:visco}. 
We first give a proof of the (AMVP) for smooth functions. The following lemma is the crucial step of that proof.

\begin{lem}
\label{lm:qp}
Let $1\le p\le\infty$, pick $\xi\in\RR^N\setminus\{0\}$, and let $A$ be a symmetric $N\times N$ matrix. Consider the quadratic function
$q:B_\ve(x)\to\RR$ defined by 
$$
q(y)=q(x)+\xi\cdot(y-x)+\frac12\,\lan A(y-x), y-x\ran, \ y\in B_\ve(x).
$$
\par
Then it holds that
$$
\mu_p(\ve,q)(x)=q(x)+\frac1{2\,(N+p)}\,\left\{ \tr(A)+(p-2)\,\frac{\lan A \xi, \xi\ran}{|\xi|^2}\right\}\,\ve^2+o(\ve^2)
$$
as $\ve\to 0$.
 \end{lem}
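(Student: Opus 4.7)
I would first reduce to the canonical setting: by Proposition~\ref{prop:lift&homogeneity}(i), we may assume $q(x)=0$; Corollary~\ref{cor:characterization} then gives $\mu_p(\ve,q)(x) = \mu_p(1, q_\ve)(0)$, where
\[
q_\ve(z) = \ve\,\xi\cdot z + \tfrac{\ve^2}{2}\langle A z, z\rangle, \qquad z\in\ol B.
\]
Rotating coordinates so that $\xi = |\xi|\,e_1$ turns all relevant integrals into ones weighted by $|z_1|^{p-2}$, and odd symmetry immediately annihilates the off-diagonal entries of $A$.

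For the main case $1 < p < \infty$, $\mu = \mu_p(1, q_\ve)(0)$ is characterized by \eqref{eq:characterization} as the unique zero of
\[
F(\mu) = \int_B |q_\ve(z) - \mu|^{p-2}\bigl(q_\ve(z) - \mu\bigr)\,dz.
\]
Setting $v = \ve\,\xi\cdot z$ and $R = \tfrac{\ve^2}{2}\langle A z, z\rangle - \mu$, I would use the pointwise identity
\[
|v+R|^{p-2}(v+R) - |v|^{p-2}v = (p-1)\,R\int_0^1 |v + tR|^{p-2}\,dt
\]
together with the vanishing of $\int_B |v|^{p-2}v\,dz$ by oddness in $z$ to reduce $F(\mu)=0$, after factoring out $\ve^{p-2}$ and letting $\ve \to 0$, to the linear condition
\[
\int_B |\xi\cdot z|^{p-2}\bigl(\tfrac12\langle A z, z\rangle - c\bigr)\,dz = 0, \qquad c = \lim_{\ve\to 0}\mu/\ve^2.
\]
The limit is by dominated convergence (integrability of $|\xi\cdot z|^{p-2}$ on $B$ requires $p>1$), and the resulting ratio of weighted integrals is evaluated via polar coordinates and Beta-function identities for $\int_{\SS^{N-1}}|\omega_1|^{\al}\,d\om$ (the content of Section~\ref{sec:computations}); this yields precisely
$c = \tfrac{1}{2(N+p)}\{\tr(A) + (p-2)\,\langle A\xi,\xi\rangle/|\xi|^2\}$.

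The endpoints $p \in \{1,2,\infty\}$ are handled directly from Theorem~\ref{thm:characterization}: $p=2$ by one explicit integration of $\tfrac12\langle A z, z\rangle$ over $B$; $p=\infty$ by a Lagrange-multiplier analysis placing the extrema of $q_\ve$ at $\pm\xi/|\xi| + O(\ve)$ on $\partial B$, whose half-sum is $\tfrac{\ve^2}{2}\langle A\xi,\xi\rangle/|\xi|^2 + O(\ve^3)$; and $p=1$ by locating the median as the level $\mu$ at which the separating surface of $q_\ve$ in $B$ --- an $O(\ve)$-perturbation of the hyperplane $\{\xi\cdot z = 0\}$ --- balances the measures of $\{q_\ve < \mu\}$ and $\{q_\ve > \mu\}$.

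\textbf{Main obstacle.} The delicate point is the dominated-convergence step when $1 < p < 2$, where the weight $|v + tR|^{p-2}$ is singular on a hypersurface $O(\ve)$-close to $\{\xi\cdot z = 0\}$; uniform-in-$\ve$ integrability on the thin slab $\{|\xi\cdot z|\lesssim\ve\}$ must be secured by a Hardy-type estimate. A companion issue is the \emph{a priori} bound $\mu = O(\ve^2)$ needed before $\mu/\ve^2 \to c$ can be asserted: monotonicity of $F$ (Theorem~\ref{thm:monotonicity}) combined with the cancellation of the odd leading contribution supplies it, though a matching strict-monotonicity lower bound for $F$ at the correct scale requires care. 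Once these are in place, the remainder is a routine symmetry-and-polar-coordinate calculation.
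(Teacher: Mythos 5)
Your proposal follows essentially the same route as the paper: rescale to the unit ball and rotate so that $\xi$ is a multiple of $e_1$, exploit the characterizing equation \eqref{eq:characterization} together with the fundamental-theorem-of-calculus identity $h(v+R)-h(v)=(p-1)R\int_0^1|v+tR|^{p-2}\,dt$ for $h(s)=|s|^{p-2}s$ (which is exactly the paper's display \eqref{lagrange}), pass to the limit by (generalized) dominated convergence to read off $\delta_0$ from Lemma~\ref{lem:integrals}, and treat $p=1$ and $p=\infty$ by geometric analysis of the median level surface and of the extrema of $q_\ve$ near $\pm\xi/|\xi|$ on $\partial B$. The two obstacles you flag are precisely the ones the paper addresses: the a priori bound $\mu=O(\ve^2)$ follows from the strict positivity of the averaged weight $\int_0^1 h'(\cdot)\,d\tau$ in \eqref{lagrange}, and the uniform integrability on the thin slab for $1<p<2$ is secured not by a Hardy estimate but by the explicit dominating family $2c\bigl||y_1|-2c\ve\bigr|^{p-2}$ fed into Theorem~\ref{generalized-L}.
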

\begin{proof}
Set
$$
q_\ve(z)=q(x+\ve\,z), \ \
v_\ve(z)=\frac{q(x+\ve z)-q(x)}{\ve}, \ \mbox{ and } \ v(z)=\xi\cdot z.
$$ 
We know that
$$
\mu_p(\ve, q)(x)=\mu_p(1, q_\ve)(0);
$$ 
thus, Proposition \ref{prop:lift&homogeneity} implies that
$$
\frac{\mu_p(\ve, q)(x)-q(x)}{\ve}=\mu_p(1,v_\ve )(0),
$$
and $v_\ve$ converges to $v$ uniformly on $B$ as $\ve\to 0$. Theorem \ref{thm:continuity} then 
yields that
$$
\lim_{\ve\to 0}\frac{\mu_p(\ve, q)(x)-q(x)}{\ve}=\mu_p(1,v)(0),
$$
and $\la=\mu_p(1,v)(0)=0$, $0$ is the unique root of 
the equation
$$
\int_B|v(z)-\la|^{p-2} [v(z)-\la]\,dz=0,
$$
for $1\le p<\infty$, and for $p=\infty$ maximizes the quantity
$$
\max(|\xi|-\la, \la+|\xi|)
$$
for $\la\in\RR$.
\par
For $1\le p\le\infty$, set
\begin{equation}
\label{def-delta}
\de_\ve=\frac{\mu_p(\ve, q)(x)-q(x)}{\ve^2}.
\end{equation}

\medskip

{\bf Case $\mathbf{1<p<\infty}$.} We define the function  $h(s)=|s|^{p-2} s$;
by some manipulations, we get that
$$
\int_B h(\xi\cdot z+\ve\left[ \lan A z, z\ran/2-\de_\ve\right])\,dz=0.
$$
Without loss of generality, we assume that $|\xi|=1$, apply the change of variables $z=R\,y$, where $R$ is a rotation matrix such that $^tR\,\xi=e_1$, and set $C=^tR\,A\,R$ to obtain that
$$
\int_B \frac{h(y_1+\ve[\lan C y, y\ran/2-\de_\ve])-h(y_1)}{\ve}\,dy=0,
$$
since $\int_B h(y_1)\,dy=0$. Thus, we have that
\begin{equation}
\label{lagrange}
\int_B \left\{\int_0^1 h'(y_1+\tau\,\ve\left[\lan C y, y\ran/2-\de_\ve\right])\,d\tau\right\}\,\left[\lan C y, y\ran/2-\de_\ve\right]\,dy=0,
\end{equation}
and this implies that $\de_\ve$ is bounded by some constant $c$
($c$ is equal to half of the norm of the matrix $C$).
\par
If $2\le p<\infty$, it is easy to prove that, by the dominated convergence theorem, (any converging subsequence of)  $\de_\ve$ converges to the number $\de_0$ defined by
\begin{equation}
\label{def-delta-zero-ell}
\int_B h'(y_1)\,[\lan C y, y\ran/2-\de_0]\,dy=0.
\end{equation}
If $1<p<2$, we observe that 
$$
\left|\int_0^1 h'(y_1+\tau\,\ve\left[\lan C y, y\ran/2-\de_\ve\right])\,\left[\lan C y, y\ran/2-\de_\ve\right]\,d\tau\right|\le
2\,c\,\Bigl||y_1|-2c\,\ve\Bigr|^{p-2}
$$
and
$$
\lim_{\ve\to 0}\int_B \left||y_1|-2 c\,\ve\right|^{p-2}\,dy=\int_B |y_1|^{p-2}\,dy.
$$
If (any converging subsequence of)  $\de_\ve$ converges to a number $\de_0$, 
then the integrand in \eqref{lagrange} converges pointwise to $h'(y_1)\,[\lan C y, y\ran/2-\de_0]$, 
and hence we can conclude that \eqref{def-delta-zero-ell} holds, by the generalized dominated convergence theorem (Theorem \ref{generalized-L}).
\par
Therefore, by Lemma \ref{lem:integrals} we have that
\begin{multline*}
\lim_{\ve\to 0}{\de_\ve}=\frac12\,\frac{\int_B|y_1|^{p-2}\,\lan C y,y\ran\,dy}{\int_B|y_1|^{p-2}\,dy}=\\
\frac1{2\,(N+p)}\,\left\{ \tr(C)+(p-2)\,\lan C\,e_1,e_1\ran\right\}=\\
\frac1{2\,(N+p)}\,\left\{ \tr(A)+(p-2)\,\frac{\lan A \xi, \xi\ran}{|\xi|^2}\right\},
\end{multline*}
since $\lan C\,e_1,e_1\ran=\lan AR\,e_1,R\,e_1\ran$, with $R\,e_1=\xi/|\xi|$.

\medskip

{\bf Case $\mathbf{p=1}$.} We know that $\mu_1(\ve,q)$ is the unique root of the equation
\begin{equation}
\label{def-median}
|\{y\in B_\ve(x): q(y)>\mu_1(\ve,q)\}|=|\{y\in B_\ve(x): q(y)<\mu_1(\ve,q)\}|.
\end{equation}
\par
Next, manipulating \eqref{def-median} gives that
$$
\left|\left\{z\in B: \xi\cdot z+\frac{\ve}{2}\,\lan A z, z\ran>\ve \de_\ve\right\}\right|=
\left|\left\{z\in B: \xi\cdot z+\frac{\ve}{2}\,\lan A z, z\ran<\ve\de_\ve\right\}\right|
$$
and, by applying the substitution $z=R\,y$, where $R$ is a rotation matrix such that $^t R\,\xi=|\xi|\,e_1$, we can infer that
\begin{multline}
\label{volume-identity}
\left|\left\{y\in B: |\xi|\,y_1+\frac{\ve}{2}\,\lan C\,y, y\ran>\ve\de_\ve\right\}\right|=\\
\left|\left\{y\in B: |\xi|\,y_1+\frac{\ve}{2}\,\lan C\,y, y\ran<\ve\de_\ve\right\}\right|,
\end{multline}
where $C=^tR A R$. 
\par
Now, consider the right-hand side of the last formula, set
$$
f_\ve(y)= |\xi|\,y_1+\frac{\ve}{2}\,\lan C\,y, y\ran,
$$
and
$$
c_\ve=\left|\left\{y\in B: f_\ve(y)<\ve\de_\ve\right\}\right|-|B^-|,
$$
where $B^-=\{y\in B: y_1\le 0\}$. 
The use of the change of variables 
$$
y=\frac{\ve\,z_1}{|\xi|}\,e_1+z' \ \mbox{  where } \ z'=(0, z_2,\dots, z_N),
$$
yields that
$$
\frac{|\xi|}{\ve}\,c_\ve=\left|\left\{(z_1,z')\in B^\ve: \ve^{-1}\,f_\ve\left(\frac{\ve\,z_1}{|\xi|}\,e_1+z' \right)<\de_\ve\right\}\right|-|B^{\ve,-}|,
$$
where 
\begin{multline*}
B^\ve=\{ (z_1, z')\in\RR^N: (\ve\,z_1/|\xi|)^2+|z'|^2<1\}, \\ \mbox{ and } \ B^{\ve,-}=\{ z\in B^\ve: z_1\le 0\}.
\end{multline*}
\par
Now, set $B'=\{ z'\in\RR^{N-1}: |z'|<1\}$ and notice that, if $\ve$ is small enough, by the implicit function theorem, there is a
unique function $g_\ve: B'\to\RR$ such that
$$
\ve^{-1}\,f_\ve\left(\frac{\ve\,g_\ve(z')}{|\xi|}\,e_1+z' \right)=\de_\ve \ \mbox{ for } \ z'\in B'.
$$
We can then infer that
\begin{multline*}
c_\ve=\frac{\ve}{|\xi|}\,\int_{B'}\left\{\min\left[g^+_\ve(z'),(|\xi|/\ve)\sqrt{1-|z'|^2}\right]-\right. \\
\left. \min\left[g^-_\ve(z'),(|\xi|/\ve)\sqrt{1-|z'|^2}\right]\right\}\,dz',
\end{multline*}
where $g^+_\ve$ and $g^-_\ve$ denote the positive and negative parts of $g_\ve$.
Thus, since $g_\ve(z')\to \de_0-\lan C z', z'\ran/2$ pointwise (possibly passing to a subsequence), by the dominated convergence theorem, we obtain that
$$
\lim_{\ve\to 0}\frac{|\xi|}{\ve}\,c_\ve=
\int_{B'}\left[\de_0-\frac12\,\lan C z', z'\ran\right]\,dz'.
$$
\par
We can repeat the same arguments for the left-hand side of \eqref{volume-identity} and obtain that
$$
\lim_{\ve\to 0}\frac{|\xi|}{\ve}\,\left\{\left|\left\{y\in B: f_\ve(y)>\ve\de_\ve\right\}\right|-|B^+|\right\}=
\int_{B'}\left[\frac12\,\lan C z', z'\ran-\de_0\right]\,dz'.
$$
Therefore, \eqref{volume-identity} implies that
$$
\int_{B'}\left[\de_0-\frac12\,\lan C\,z', z'\ran\right]\,dz'=0,
$$
and hence 
$$
\de_0\,\frac{\om_{N-1}}{N-1}=\frac12\,\int_{B'}\lan C\,z', z'\ran\,dz'=
\frac{\om_{N-1}}{2\,(N^2-1)}\,\sum_{j=2}^{N}C_{jj}.
$$
Finally, the desired conclusion follows from
$$
\lim_{\ve\to 0}\frac{\mu_1(\ve,q)(x)-q(x)}{\ve^2}=\lim_{\ve\to 0}\de_\ve=\de_0,
$$
where
$$
2\,(N+1)\,\de_0^2=\tr(C)-\lan C\,e_1,e_1\ran=
\tr(A)-\frac{\lan A\,\xi, \xi\ran}{|\xi|^2},
$$
since $\lan C\,e_1,e_1\ran=\lan AR\,e_1,R\,e_1\ran$, with $R\,e_1=\xi/|\xi|$.

\medskip

{\bf Case $\mathbf{p=\infty}$.}  For what we already showed at the beginning of this proof, we know that
\begin{multline*}
\frac{\mu_\infty(\ve,q)(x)-q(x)}{\ve}=\\
\frac12\,\left\{\min_{z\in B}[\xi\cdot z+\ve\,\lan A\,z,z\ran/2]+\max_{z\in B}[\xi\cdot z+\ve\,\lan A\,z,z\ran/2]\right\}.
\end{multline*}
Now, notice that, if $\ve$ is sufficiently small,  the minimum and the maximum are respectively attained at the points $z^+_\ve$ and $z^-_\ve$ on $\PA B$ and
$$
z^\pm_\ve=\pm\frac{\xi+\ve\,A\,z'_\ve}{|\xi+\ve\,A\,z'_\ve|}=\pm\frac{\xi}{|\xi|}+o(\ve),
$$
as $\ve\to 0$.
Thus, we can infer that
$$
\frac{\mu_\infty(\ve,q)-q(x)}{\ve^2}=
\frac{\lan A\,z^+_\ve,z^+_\ve\ran+\lan A\,z^-_\ve,z^-_\ve\ran}{4}+o(1)
$$
and conclude that
$$
\frac{\mu_\infty(\ve,q)-q(x)}{\ve^2}\to\frac12\,\frac{\lan A\,\xi, \xi\ran}{|\xi|^2}
$$
as $\ve\to 0$.
\end{proof}

\begin{thm}[Asymptotics for $\mu_p(\ve,u)$ as $\ve\to 0$]
\label{thm:p=p}
Let $1\le p\le\infty$. Let $\Om\subseteq\RR^N$ be an open set and $x\in\Om$.
\par
If $u\in C^2(\Om)$ with $\nabla u(x)\ne 0$,
then
\begin{equation}
\label{asymptotic-formula}
\mu_p(\ve,u)(x)=u(x)+\frac12\,\frac{\De^n_p u(x)}{N+p}\,\ve^2+o(\ve^2) \ \mbox{ as } \ \ve\to 0.
\end{equation}
\end{thm}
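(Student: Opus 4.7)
The strategy is to reduce the claim for an arbitrary $C^2$ function to the quadratic case already handled in Lemma \ref{lm:qp}. First, I would Taylor expand $u$ at $x$ to second order, writing $u=q+r$ on $B_\ve(x)$, where
$$
q(y)=u(x)+\na u(x)\cdot(y-x)+\tfrac12\lan\na^2u(x)(y-x),y-x\ran
$$
and $r=u-q$. Since $u\in C^2$, the integral form of Taylor's remainder combined with the (uniform) continuity of $\na^2 u$ near $x$ yields $|r(y)|\le \tfrac12\,\om(\ve)\,|y-x|^2\le \tfrac12\,\om(\ve)\,\ve^2$ for every $y\in B_\ve(x)$, where $\om(\ve)=\sup\{\,\nr\na^2u(z)-\na^2u(x)\nr:z\in B_\ve(x)\,\}\to 0$ as $\ve\to 0$.

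Next, I would use the two structural properties of the functional $\mu_p$ established in Section \ref{sec:properties}. The pointwise inequalities
$$
q(y)-\tfrac12\,\om(\ve)\,\ve^2\le u(y)\le q(y)+\tfrac12\,\om(\ve)\,\ve^2,\qquad y\in B_\ve(x),
$$
combined with the monotonicity of $\mu_p$ (Theorem \ref{thm:monotonicity}) and the shift identity $\mu_p^{B_\ve(x)}(v+c)=\mu_p^{B_\ve(x)}(v)+c$ (Proposition \ref{prop:lift&homogeneity}(i)) give the sandwich
$$
\mu_p(\ve,q)(x)-\tfrac12\,\om(\ve)\,\ve^2\le \mu_p(\ve,u)(x)\le \mu_p(\ve,q)(x)+\tfrac12\,\om(\ve)\,\ve^2,
$$
hence $\mu_p(\ve,u)(x)=\mu_p(\ve,q)(x)+o(\ve^2)$. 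Continuity of $u$ and $q$ makes monotonicity applicable in every regime $1\le p\le\infty$.

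Finally, I would apply Lemma \ref{lm:qp} to $q$ with the choices $\xi=\na u(x)\ne 0$ and $A=\na^2u(x)$. Since $q(x)=u(x)$ and
$$
\tr(A)+(p-2)\,\frac{\lan A\xi,\xi\ran}{|\xi|^2}=\De u(x)+(p-2)\,\frac{\lan\na^2u(x)\na u(x),\na u(x)\ran}{|\na u(x)|^2}=\De_p^n u(x),
$$
the lemma produces $\mu_p(\ve,q)(x)=u(x)+\frac{\De_p^n u(x)}{2(N+p)}\,\ve^2+o(\ve^2)$, and chaining with the previous step yields \eqref{asymptotic-formula}.

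I do not expect any genuine obstacle in this theorem itself: the hard work is concentrated in Lemma \ref{lm:qp}, whose three separate case analyses (generalized dominated convergence for $1<p<\infty$, the implicit-function argument for $p=1$, and the perturbation of the extremizers for $p=\infty$) carry the analytic content. The passage from quadratic polynomials to general $C^2$ functions is immediate precisely because $\mu_p$ is simultaneously monotonic and translation-equivariant — exactly the features, highlighted in the introduction, that the competing means $\mu_p^*$, $\mu_p'$, $\mu_p''$, $\mu_p^{**}$ do not jointly possess.
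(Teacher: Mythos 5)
Your proposal is correct and follows essentially the same route as the paper: approximate $u$ by its second-order Taylor polynomial $q$, bound the remainder by $o(\ve^2)$ uniformly on $B_\ve(x)$, invoke monotonicity (Theorem \ref{thm:monotonicity}) together with translation-equivariance (Proposition \ref{prop:lift&homogeneity}) to sandwich $\mu_p(\ve,u)(x)$ between $\mu_p(\ve,q)(x)\pm o(\ve^2)$, and then apply Lemma \ref{lm:qp}. The only cosmetic difference is that you phrase the error via a modulus of continuity $\om(\ve)$, whereas the paper phrases it with an $\eta$--$\ve_\eta$ argument and a final $\liminf/\limsup$ squeeze; the two are equivalent.
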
 

\begin{proof}
Let $\ve>0$ be such that $\ol{B_\ve(x)}\subset\Om$ and consider the function $q(y)$ in Lemma \ref{lm:qp} with $q(x)=u(x)$, $\xi=\na u(x)$ and $A=\na^2u(x)$; also, notice that
$$
\tr(A)+(p-2)\,\frac{\lan A\,\xi, \xi\ran}{|\xi|^2}=\De_p^nu(x).
$$
\par
Set $u_\ve(z)=u(x+\ve z)$ and $q_\ve(z)=q(x+\ve z)$; since $u\in C^2(\Om)$, then for every $\eta>0$ there exists $\ve_\eta>0$ such that
$$
|u_\ve(z)-q_\ve(z)|<\eta\,\ve^2 \ \mbox{ for every } \ z\in\ol{B} \ \mbox{ and } \ 0<\ve<\ve_\eta.
$$
Thus, since by Proposition \ref{prop:lift&homogeneity} 
$$
\mu_p(\ve, q\pm\eta\ve^2)(x)=
\mu_p(\ve, q)(x)\pm\eta\ve^2,
$$
Theorem \ref{thm:monotonicity} and Corollary \ref{cor:characterization} yield that
$$
\frac{\mu_p(\ve, q)(x)-u(x)}{\ve^2}-\eta\le 
\frac{\mu_p(\ve, u)(x)-u(x)}{\ve^2}\le 
\frac{\mu_p(\ve, q)(x)-u(x)}{\ve^2}+\eta.
$$
Therefore, Lemma \ref{lm:qp} implies that
\begin{multline*}
\frac12\,\frac{\De^n_p u(x)}{N+p} -\eta\le
\liminf_{\ve\to 0} \frac{\mu_p(\ve, u)(x)-u(x)}{\ve^2}\le \\
\limsup_{\ve\to 0} \frac{\mu_p(\ve, u)(x)-u(x)}{\ve^2}\le \frac12\,\frac{\De^n_p u(x)}{N+p} +\eta.
\end{multline*}
The desired conclusion follows, since $\eta$ is arbitrary.
\end{proof}

\begin{cor}[(AMVP) for smooth functions]
\label{th:main} 
Let $1\le p\le\infty$ and $u\in C^2(\Om)$. The following assertions are equivalent:
\begin{enumerate}[(i)]
\item
$\De_p^n u(x)=0$ at any $x\in\Om$ such that $\na u(x)\not=0$;
\item
$u(x)=\mu_p(\ve,u)(x)+o(\ve^2)$ as $\ve\to 0$ at any $x\in\Om$ such that $\na u(x)\not=0$.
\end{enumerate}
\end{cor}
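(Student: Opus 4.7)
The plan is to obtain Corollary \ref{th:main} as an immediate consequence of the pointwise asymptotic expansion \eqref{asymptotic-formula} in Theorem \ref{thm:p=p}. Since $u\in C^2(\Om)$ and $\nabla u(x)\ne 0$ at the point under consideration, that theorem applies and yields
$$
\mu_p(\ve,u)(x)-u(x)=\frac{1}{2}\,\frac{\Delta_p^n u(x)}{N+p}\,\ve^2+o(\ve^2)\quad\text{as }\ve\to 0.
$$
Both implications will then be read off by inspecting the coefficient of $\ve^2$ in this identity at a point $x$ where $\nabla u(x)\ne 0$.

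For the implication (i)$\Rightarrow$(ii), if $\Delta_p^n u(x)=0$ then the leading $\ve^2$ term on the right-hand side vanishes, so the expansion collapses to $\mu_p(\ve,u)(x)-u(x)=o(\ve^2)$, which is exactly condition (ii). For the converse (ii)$\Rightarrow$(i), divide the asymptotic identity by $\ve^2$ and let $\ve\to 0$; the hypothesis (ii) forces $[\mu_p(\ve,u)(x)-u(x)]/\ve^2\to 0$, so that $\Delta_p^n u(x)/[2(N+p)]=0$, that is $\Delta_p^n u(x)=0$. Both directions hold at every $x\in\Om$ with $\nabla u(x)\ne 0$, which is the range of $x$ quantified in the statement.

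At this stage there is essentially no obstacle: all of the analytic work has already been done. In particular, the quadratic model computation of Lemma \ref{lm:qp}, the reduction from a general $C^2$ function to its second-order Taylor polynomial via the monotonicity (Theorem \ref{thm:monotonicity}) and the translation invariance (Proposition \ref{prop:lift&homogeneity}) of the $p$-mean functional, and the passage to the limit through Theorem \ref{thm:continuity} have all been carried out in the proof of Theorem \ref{thm:p=p}. The assumption $\nabla u(x)\ne 0$ serves the double role of ensuring that $\Delta_p^n u(x)$ is well defined and that the asymptotic expansion of Theorem \ref{thm:p=p} is available; consequently the corollary is a purely formal consequence of the expansion, with no further estimates required.
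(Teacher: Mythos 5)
Your proof is correct and is exactly the argument the paper intends: the corollary is stated immediately after Theorem \ref{thm:p=p} with no separate proof, being a direct consequence of the asymptotic expansion \eqref{asymptotic-formula}, and both implications follow by inspecting the coefficient of $\ve^2$ just as you describe.
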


\begin{rem}
\rm 
Without any essential modification, we can show a similar result with $\mu_p(\ve,u)(x)$ in Corollary \ref{th:main} replaced by an analogous spherical $p$-mean value of $u$ on $\PA B_\ve(x)$, that is the 
minimum value in the variational problem \eqref{var-probl}, where the $L^p$ norm is taken on   
$\PA B_\ve(x)$.
The asymptotic formula \eqref{asymptotic-formula} reads in this case as:
\begin{equation*}
\mu_p(\ve,u)(x)=u(x)+\frac12\,\frac{\De^n_p u(x)}{N+p-2}\,\ve^2+o(\ve^2) \ \mbox{ as } \ \ve\to 0.
\end{equation*}
\end{rem}

\medskip

We are now going to prove that continuous viscosity solutions of the normalized $p$-Laplace equation
are characterized by an (AMVP) in the viscosity sense.  We recall the relevant definitions from \cite{MPR2}.

\medskip

A function $u\in C(\Om)$ is a {\it viscosity solution of $\Delta_p^n u=0$ in $\Om$}, if both of the following requisites hold at every $x\in\Om$:
\begin{enumerate}[(i)]
\item
for any function $\phi$ of class $C^2$ near $x$ such that $u-\phi$ has a strict minimum at $x$ with 
$u(x)=\phi(x)$ and $\nabla\phi(x)\ne 0$, there holds that  $\Delta^n_p\phi(x)\leq 0$;
\item 
for any function $\phi$ of class $C^2$ near $x$ such that $u-\phi$ has a strict maximum at $x$ with 
$u(x)=\phi(x)$ and $\nabla\phi(x)\ne 0$, there holds that  $\Delta^n_p\phi(x)\geq 0$. 
\end{enumerate}
\par
We say that a function $u\in C(\Om)$ satisfies at $x\in\Om$ the {\it asymptotic mean value property (AMVP)} 
$$
u(x)=\mu_p(\ve,u)(x)+o(\ve^2) \ \mbox{ as } \ \ve\to 0
$$
{\it in  the viscosity sense}
if both of the following requisites hold:
\begin{enumerate}[(a)]
\item
for any function $\phi$ of class $C^2$ near $x$ such that $u-\phi$ has a strict minimum at $x$ with 
$u(x)=\phi(x)$ and $\nabla\phi(x)\ne 0$, there holds that 
$$
\phi(x)\geq\mu_p(\ve,\phi)(x)+o(\ve^2) \ \mbox{ as } \ \ve\to 0;
$$
\item
for any function $\phi$ of class $C^2$ near $x$ such that $u-\phi$ has a strict maximum at $x$ with 
$u(x)=\phi(x)$ and $\nabla\phi(x)\ne 0$, there holds that 
$$
\phi(x)\leq\mu_p(\ve,\phi)(x)+o(\ve^2) \ \mbox{ as } \ \ve\to 0.
$$
\end{enumerate}
\par

We are now in the position to prove Theorem \ref{th:main:visco}. 

\begin{proof}[Proof of Theorem \ref{th:main:visco}] 
Let $\phi$ be of class $C^2$ near $x$ with $\nabla \phi(x)\ne 0$; by Theorem \ref{thm:p=p}, we know that
\begin{equation}\label{final-phi}
\phi(x)=\mu_p(\ve,\phi)(x)-\frac12\,\frac{\ve^2}{N+p}\Delta_p^n\phi(x)+o(\ve^2)
\end{equation}
as $\ve\to 0$. 
\par
Thus, if $u-\phi$ has a strict minimum at $x$ with $u(x)=\phi(x)$ and $\De_p^n \phi(x)\le 0$, then 
\eqref{final-phi} implies that 
$$
\phi(x)\ge\mu_p(\ve,\phi)(x)+o(\ve^2) \ \mbox{ as } \ \ve\to 0.
$$
Conversely, if $\phi(x)\ge\mu_p(\ve,\phi)(x)+o(\ve^2)$ as $\ve\to 0$, by \eqref{final-phi} we infer that
$$
-\De_p^n\phi(x)\ge o(1) \ \mbox{ as } \ \ve\to 0,
$$
and hence $\De_p^n\phi(x)\le 0$.
\par
We proceed similarly, if $u-\phi$ has a strict maximum at $x$.
\end{proof}

\section{The (AMVP) for the parabolic case}
\label{sec:parabolic}

The situation in the parabolic case is similar to that presented in the previous paragraph: we just
have to use the proper cost function. As already observed, the choice disclosed in \eqref{var-probl-heat} is a good candidate since it yields for $p=2$ the classical mean value property for solutions of the heat equation. Thus, we shall denote: 
\begin{equation}
\label{def-pi}
\mbox{$\pi_p(\ve, u)(x,t)=$ the unique $\pi\in\RR$ satisfying \eqref{var-probl-heat}.}
\end{equation}
\par
It is clear that the characterization, continuity and monotonicity of Theorems \ref{thm:characterization}, \ref{thm:continuity} and \ref{thm:monotonicity}
apply to $\pi_p(\ve, u)(x,t)$, if we set $X=\ol{E_\ve(x,t)}$ and $d\nu(y,s)=|x-y|^2/(t-s)^2 dy\, ds$. In particular,
the {\it heat mean value of $u$} is 
$$
\dashint_{E_\ve(x,t)}u(y,s)\,d\nu(y,s)=\frac1{4\,\ve^{N}}\int_{E_\ve(x,t)} u(y,s)\,d\nu(y,s)
$$ 
and the {\it heat median of $u$}, $\pmed\limits_{E_\ve(x,t)} u$, is the unique root of the equation:
\begin{equation}
\label{def-median-parabolic}
\int_{E_\ve^{\la,+}(x,t)}\,\frac{|x-y|^2}{(t-s)^2}\,dy\,ds=\int_{E_\ve^{\la,-}(x,t)}\,\frac{|x-y|^2}{(t-s)^2}\,dy\,ds,
\end{equation}
where
$$
E_\ve^{\la,\pm}(x,t)=\{(y,s)\in E_\ve(x,t): \la\lessgtr u(y,s)\}.
$$
\par
The companion of Corollary \ref{cor:characterization} is the following result, that does not need an ad hoc proof.
\begin{cor}
\label{thm:characterization-parabolic}
Let $1\le p<\infty$, $u\in C(\ol{E_\ve(x,t)})$ and define
$$
u_\ve(z,\si)=u(x+\ve z, t-\ve^2 \si), \ (z,\si)\in E,
$$
where
\begin{equation}
\label{def-E}
E=\{ (z,\si)\in\RR^{N+1} : 0<\si<\frac1{4\pi}, \Phi(z,\si)>1\}.
\end{equation}
\par
Then
\begin{equation}
\label{riscale-parabolic}
\pi_p(\ve, u)(x,t)=\pi_p(1, u_\ve)(0,0),
\end{equation}
where $\la=\pi_p(1, u_\ve)(0,0)$ is the unique root of the equation
\begin{equation}
\label{characterization-parabolic}
\int_E |u_\ve(z,\si)-\la|^{p-2}[u_\ve(z,\si)-\la]\,d\nu(z,\si)=0.
\end{equation}
\end{cor}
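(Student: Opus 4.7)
The plan is to mimic the proof of Corollary \ref{cor:characterization} by means of a space-time rescaling that simultaneously transports the heat ball $E_\ve(x,t)$ to the fixed region $E$ and behaves well against the measure $d\nu$. The main work is all in bookkeeping; nothing delicate should arise.

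\emph{Step 1: the change of variables.} I would introduce the substitution
$$
y=x+\ve z,\qquad s=t-\ve^2\si,
$$
whose Jacobian is $\ve^{N+2}$. Under this substitution, $s<t$ translates into $\si>0$, and
$$
\Phi(x-y,t-s)=\Phi(-\ve z,\ve^2\si)=\ve^{-N}\,(4\pi\si)^{-N/2}e^{-|z|^2/(4\si)}=\ve^{-N}\Phi(z,\si);
$$
therefore $\Phi(x-y,t-s)>\ve^{-N}$ if and only if $\Phi(z,\si)>1$. Since $\Phi(z,\si)\le\Phi(0,\si)=(4\pi\si)^{-N/2}$, the condition $\Phi(z,\si)>1$ automatically forces $0<\si<1/(4\pi)$. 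Consequently the substitution above is a bijection of $E$ onto $E_\ve(x,t)$ (up to a negligible boundary).

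\emph{Step 2: transformation of the measure and the $L^p$-norm.} A direct computation gives
$$
d\nu(y,s)=\frac{|x-y|^2}{(t-s)^2}\,dy\,ds=\frac{\ve^{2}|z|^2}{\ve^{4}\si^{2}}\,\ve^{N+2}\,dz\,d\si=\ve^{N}\,\frac{|z|^2}{\si^2}\,dz\,d\si,
$$
so in particular the rescaled measure on $E$ is precisely the one appearing in the statement. Hence, for every $\la\in\RR$ and $1\le p<\infty$,
$$
\nr u-\la\nr_{L^p(E_\ve(x,t),\nu)}^{p}=\int_{E_\ve(x,t)}|u(y,s)-\la|^p\,d\nu=\ve^{N}\int_{E}|u_\ve(z,\si)-\la|^p\,d\nu(z,\si),
$$
that is,
$$
\nr u-\la\nr_{L^p(E_\ve(x,t),\nu)}=\ve^{N/p}\,\nr u_\ve-\la\nr_{L^p(E,\nu)}.
$$

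\emph{Step 3: conclusion via Theorem \ref{thm:characterization}.} Since the factor $\ve^{N/p}$ does not depend on $\la$, the two functionals $\la\mapsto\nr u-\la\nr_{L^p(E_\ve(x,t),\nu)}$ and $\la\mapsto\nr u_\ve-\la\nr_{L^p(E,\nu)}$ have the same (unique) minimizer, which yields \eqref{riscale-parabolic}. Applying Theorem \ref{thm:characterization} to the compact measure space $X=\ol{E}$ equipped with the positive Radon measure $d\nu(z,\si)=|z|^2/\si^2\,dz\,d\si$ (note that $\nu(E)<\infty$ because $E$ is bounded away from $\si=0$ by its very definition, and $|z|$ is bounded on $E$) then gives the Euler–Lagrange identity \eqref{characterization-parabolic} as the characterization of $\la=\pi_p(1,u_\ve)(0,0)$.

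There is no real obstacle. The only point that requires a brief check is that the measure $\nu$ is finite on $E$ and that $u_\ve\in C(\ol{E})$; both follow from the explicit form of $E$ (compactness and bounded-away-from-singular-time) and from the continuity hypothesis on $u$ together with the smooth substitution. The rest is identical, mutatis mutandis, to the elliptic argument of Corollary \ref{cor:characterization}.
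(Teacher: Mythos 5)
Your approach is exactly what the paper intends: the paper gives no proof of this corollary, simply noting that it ``does not need an ad hoc proof'' and is the companion of Corollary~\ref{cor:characterization}, and your argument is the faithful parabolic transcription of that elliptic argument (space--time rescaling, Jacobian and measure bookkeeping, then appeal to the uniqueness in Theorem~\ref{thm:characterization}). The computations in Steps 1 and 2 are correct.

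One side remark in Step 3 is, however, factually wrong and should be fixed: $E$ is \emph{not} bounded away from $\si=0$. The point $(z,\si)=(0,0)$ lies in $\ol{E}$, since $\Phi(0,\si)=(4\pi\si)^{-N/2}\to\infty$ as $\si\to 0^{+}$, so every horizontal slice $\{\si=\text{const}\}$ with small $\si>0$ meets $E$ near the origin. The correct reason $\nu(E)<\infty$ is that the slice at height $\si$ is the ball $\{|z|<\sqrt{-2N\si\log(4\pi\si)}\}$ whose radius tends to $0$ as $\si\to 0^{+}$ fast enough to compensate for the singular factor $\si^{-2}$; concretely, in spherical coordinates $\nu(E)$ reduces (up to the finite angular factor) to $\int_{E_*}r^{N+1}\si^{-2}\,dr\,d\si$, which is finite by Lemma~\ref{lem:numbers-parabolic} with $\al=(N+2)/2$ and $\be=2$ (the condition $\be<\al+1$ reads $0<N$). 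With that correction, and the observation that $\ol{E}$ is compact because it is closed and contained in a bounded box, the hypotheses of Theorem~\ref{thm:characterization} are satisfied and your conclusion follows as stated.
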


\begin{lem}
\label{lm:qp-parabolic}
Let $1\le p\le\infty$, pick $a\in\RR$ and $\xi\in\RR^N\setminus\{0\}$, and let $A$ be a symmetric $N\times N$ matrix. 
\par
Consider the quadratic function
$q:E_\ve(x,t)\to\RR$ defined by 
$$
q(y,s)=q(x,t)+\xi\cdot(y-x)+a\,(s-t)+\frac12\,\lan A(y-x), y-x\ran
$$
for $(y,s)\in E_\ve(x,t)$.
Let $\pi_p(\ve,q)$ be the heat $p$-mean of $q$ on $E_\ve(x,t)$.
\par
Then it holds that
\begin{multline*}
\pi_p(\ve,q)=q(x,t)+\\
\frac1{4\pi}\left(1-\frac{2}{N+p}\right)^{1+\frac{N+p}{2}}\!\!
\left\{-a+\frac{N}{N+p-2}\left[\tr(A)+(p-2)\frac{\lan A \xi, \xi\ran}{|\xi|^2}\right]\right\}\ve^2+o(\ve^2)
\end{multline*}
as $\ve\to 0$.
 \end{lem}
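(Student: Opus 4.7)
The plan is to mirror the proof of Lemma \ref{lm:qp}, with the Euclidean ball $B_\ve(x)$ replaced by the heat ball $E_\ve(x,t)$ and the extra time term $a(s-t)$ carried through the same three-case split ($1<p<\infty$, $p=1$, $p=\infty$). First I would rescale by Corollary \ref{thm:characterization-parabolic}: setting $q_\ve(z,\si)=q(x+\ve z,t-\ve^2\si)$, I write
$$
q_\ve(z,\si)-q(x,t)=\ve\,\xi\cdot z+\ve^2\bigl[-a\,\si+\tfrac12\lan Az,z\ran\bigr],
$$
and apply Proposition \ref{prop:lift&homogeneity} with Theorem \ref{thm:continuity}. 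Since $d\nu(z,\si)=|z|^2/\si^2\,dz\,d\si$ is symmetric in $z$ on $E$, the function $\xi\cdot z$ has $p$-mean zero there for every $1\le p\le\infty$, so $\pi_p(\ve,q)(x,t)=q(x,t)+o(\ve)$ as $\ve\to 0$, exactly as in the elliptic case.

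Next, I set $\de_\ve=[\pi_p(\ve,q)(x,t)-q(x,t)]/\ve^2$ and run the case analysis of Lemma \ref{lm:qp}. For $1<p<\infty$, I insert the expression for $q_\ve$ into the Euler-Lagrange equation \eqref{characterization-parabolic}, divide by $\ve$, and pass to the limit by (generalized) dominated convergence; after applying a rotation $R$ with ${}^tR\,\xi=|\xi|\,e_1$ and setting $C={}^tR\,A\,R$ (which leaves $E$ and $d\nu$ invariant), the limiting equation becomes
$$
\de_0=\frac{\int_E|y_1|^{p-2}\bigl[-a\,\si+\tfrac12\lan Cy,y\ran\bigr]\,d\nu(y,\si)}{\int_E|y_1|^{p-2}\,d\nu(y,\si)}.
$$
The cases $p=1$ and $p=\infty$ are handled by the level-set argument and the min-max argument of Lemma \ref{lm:qp} respectively, now on $E$ rather than on $B$, carrying the extra $-a\si$ correction. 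Using the sign flips $y_i\mapsto-y_i$ for $i\ge 2$ and the permutation symmetry of $(y_2,\dots,y_N)$ in $d\nu$, the off-diagonal contributions of $\lan Cy,y\ran$ vanish and the diagonal ones collapse, so that the numerator reduces to an explicit linear combination of the four scalar integrals
$$
\int_E|y_1|^{p-2}\,d\nu,\ \ \int_E|y_1|^p\,d\nu,\ \ \int_E|y_1|^{p-2}|y|^2\,d\nu,\ \ \int_E|y_1|^{p-2}\si\,d\nu.
$$
Since $\tr C=\tr A$ and $\lan Ce_1,e_1\ran=\lan A\xi,\xi\ran/|\xi|^2$, the normalized $p$-Laplacian combination $\tr A+(p-2)\lan A\xi,\xi\ran/|\xi|^2$ will emerge from the diagonal terms, while the $-a\si$ term produces the isolated summand $-a$ in the statement.

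The main obstacle is therefore the explicit evaluation of those four integrals over the heat ball $E=\{(z,\si):0<\si<1/(4\pi),\ |z|^2<-2N\si\log(4\pi\si)\}$, which is more delicate than its Euclidean counterpart in Lemma \ref{lem:integrals}. The $z$-slice of $E$ at fixed $\si$ is a ball of radius $r(\si)=\sqrt{-2N\si\log(4\pi\si)}$, so after radial integration in $z$ each integral reduces to a one-dimensional integral in $\si$ over $(0,1/(4\pi))$ whose integrand is a power of $\si\log(1/(4\pi\si))$; the substitution $\tau=-\log(4\pi\si)$ then turns them into Gamma-function expressions. Dividing numerator by denominator and using identities of the form $\int_E|y_1|^p d\nu=\frac{p-1}{N+p-2}\int_E|y_1|^{p-2}|y|^2 d\nu$ that arise from integration by parts in the radial variable, the ratios cancel to produce exactly the coefficient $\frac{N}{N+p-2}$ on the $p$-Laplacian combination and the common prefactor $\frac{1}{4\pi}\bigl(1-\frac{2}{N+p}\bigr)^{1+(N+p)/2}$ stated in the lemma. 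I would defer these computations to Section \ref{sec:computations} and obtain the claimed asymptotic formula by substituting their values into the expression for $\de_0$.
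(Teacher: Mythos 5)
Your proposal follows essentially the same path as the paper's proof: rescale via Corollary \ref{thm:characterization-parabolic}, reduce to the leading-order limit $\pi_p(1,\xi\cdot z)=0$ by continuity, set up $\de_\ve$, pass to the limit in the Euler--Lagrange equation \eqref{characterization-parabolic} via (generalized) dominated convergence for $1<p<\infty$, handle $p=1,\infty$ by the level-set and min-max arguments on $E$, and finally evaluate the resulting ratios of integrals over $E$ by factoring into angular integrals on $\SS^{N-1}$ and radial--temporal integrals that reduce to Gamma functions under $\tau=-\log(4\pi\si)$. The only cosmetic difference is that you phrase the reduction in terms of four scalar integrals and sign-flip symmetry, while the paper factors the ratios directly in Lemma \ref{lem:integrals-parabolic} and reuses the spherical identity from Lemma \ref{lem:integrals}; the content is the same.
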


 \begin{proof}
We proceed similarly to the proof of Lemma \ref{lm:qp}.
Set
\begin{multline*}
q_\ve(z, \si)=q(x+\ve\,z, t-\ve^2\,\si), \\
v_\ve(z, \si)=\frac{q(x+\ve z, t-\ve^2\,\si)-q(x,t)}{\ve} \ \mbox{ and } \ v(z,\si)=\xi\cdot z.
\end{multline*}
We know that
$$
\pi_p(\ve, q)(x,t)=\pi_p(1, q_\ve)(0,0);
$$
thus, Proposition \ref{prop:lift&homogeneity} implies that
$$
\frac{\pi_p(\ve, q)(x,t)-q(x,t)}{\ve}=\pi_p(1,v_\ve )(0,0),
$$
and $v_\ve$ converges to $v$ uniformly on $E$ as $\ve\to 0$. Theorem \ref{thm:continuity} then 
yields that
$$
\lim_{\ve\to 0}\frac{\pi_p(\ve, q)(x,t)-q(x,t)}{\ve}=\pi_p(1,v)(0,0),
$$
and $\pi_p(1, v)=0$, since it is the unique solution $\la$ of
$$
\int_E|v(z, \si)-\la|^{p-2} [v(z, \si)-\la]\, d\nu(z,\si)=0,
$$
for $1\le p<\infty$, and for $p=\infty$ maximizes the quantity
$$
\max(|\xi|-\la, \la+|\xi|)
$$
for $\la\in\RR$.
\par
As before, set
$$
\de_\ve=\frac{\pi_p(\ve, q)(x,t)-q(x,t)}{\ve^2}.
$$
\medskip

{\bf Case $\mathbf{1<p<\infty}$.} 
By some manipulations, we get that
$$
\int_E h(\xi\cdot z+\ve\left[ -a\,\si+\lan A z, z\ran/2-\de_\ve\right])\, d\nu(z,\si)=0,
$$
where $h$ is the function already defined.
Without loss of generality, we assume that $|\xi|=1$, apply the change of variables $z=R\,y$, where $R$ is a rotation matrix such that $^tR\,\xi=e_1$, and set $C=^tR\,A\,R$ to obtain that
$$
\int_E \frac{h(y_1+\ve[-a\,\si+\lan C y, y\ran/2-\de_\ve])-h(y_1)}{\ve}\, d\nu(y,\si),
$$
since $\int_E h(y_1)\,|y|^2/\si^2\, dy d\si=0$. Thus, by proceeding as before, we have that
\begin{multline}
\label{lagrange-parabolic}
\de_\ve\,\int_E \left\{\int_0^1 h'(y_1+\tau\,\ve\left[-a\,\si+\lan C y, y\ran/2-\de_\ve\right])\,d\tau\right\} d\nu(y,\si)=\\
\int_E \left\{\int_0^1 h'(y_1+\tau\,\ve\left[-a\,\si+\lan C y, y\ran/2-\de_\ve\right])\,d\tau\right\}\,\left[-a\,\si+\lan C y, y\ran/2\right] d\nu(y,\si),
\end{multline}
and this implies that $\de_\ve$ is bounded by some constant 
(this is equal to $c+|a|/4\pi$).
\par
If $2\le p<\infty$, it is easy to prove that, by the dominated convergence theorem, (any converging subsequence of)  $\de_\ve$ converges to the number $\de_0$ defined by
\begin{equation}
\label{def-delta-zero}
\int_E h'(y_1)\,[-a\,\si+\lan C y, y\ran/2-\de_0]\, d\nu(y,\si)=0.
\end{equation}
If $1< p<2$, we observe that 
\begin{multline*}
\left|\int_0^1 h'(y_1+\tau\,\ve\left[-a\,\si+\lan C y, y\ran/2-\de_\ve\right])
\,\left[-a\,\si+\lan C y, y\ran/2-\de_\ve\right]\,d\tau\right| 
\le \\
2\,(c+|a|/4\pi)\,\Bigl||y_1|-2\,(c+|a|/4\pi)\,\ve\Bigr|^{p-2}
\end{multline*}
and
$$
\lim_{\ve\to 0}\int_E \Bigl||y_1|-2\,(c+|a|/4\pi)\,\ve\Bigr|^{p-2}\,d\nu(y,\si)=\int_B |y_1|^{p-2}\,d\nu(y,\si).
$$
If (any converging subsequence of)  $\de_\ve$ converges to a number $\de_0$, then the integrand in \eqref{lagrange} converges pointwise to $h'(y_1)\,[-a\,\si+\lan C y, y\ran/2-\de_0]$, and hence we can conclude that \eqref{def-delta-zero} holds, by the generalized dominated convergence theorem (Theorem \ref{generalized-L}).
\par
Therefore, by Lemma \ref{lem:integrals-parabolic} we have that
\begin{multline*}
\lim_{\ve\to 0}{\de_\ve}=\frac{\int_E|y_1|^{p-2}\,[-a\,\si+\lan C y,y\ran/2]\,d\nu(y,\si)}{\int_E|y_1|^{p-2}\,d\nu(y,\si)}=\\
\frac1{4\pi}\,\left(1-\frac2{N+p}\right)^{\frac{N+p}{2}+1}
\left\{-a+\frac{N}{N+p-2}\,\left[\tr(C)+(p-2)\,\lan C e_1, e_1\ran\right]\right\}=\\
\frac1{4\pi}\,\left(1-\frac2{N+p}\right)^{\frac{N+p}{2}+1}
\left\{-a+\frac{N}{N+p-2}\,\left[\tr(A)+(p-2)\,\frac{\lan A \xi, \xi\ran}{|\xi|^2}\right]\right\}
\end{multline*}
since $\lan C\,e_1,e_1\ran=\lan AR\,e_1,R\,e_1\ran$, with $R\,e_1=\xi/|\xi|$.

\medskip

{\bf Case $\mathbf{p=1}$.}
By proceeding as in the proof of Lemma \ref{lm:qp}, it is easy to show that
$$
\int_{E^+_\ve} \frac{|y|^2}{\si^2}\,dy\,d\si=\int_{E^-_\ve} \frac{|y|^2}{\si^2}\,dy\,d\si
$$
where
$$
E^\mp_\ve=\left\{(y,\si)\in E: -y_1\lessgtr 0, |\xi|\,y_1+\ve\,[-a\,\si+\lan C y, y\ran/2]\lessgtr\ve\de_\ve\right\},
$$
$R$ is the usual rotation matrix, and $C={^tR} A R$. 
\par
Now, we assume that $|\xi|=1$ without loss of generality and use the change of variables 
$$
y=\ve\,z_1\,e_1+z' \ \mbox{  where } \ z'=(0, z_2,\dots, z_N)
$$
and take the limit as $\ve\to 0$; similarly to the proof of Lemma \ref{lm:qp}
we obtain that
$$
\int_{E^+_0} \frac{|z|^2}{\si^2}\,dz\,d\si=\int_{E^-_0} \frac{|z|^2}{\si^2}\,dz\,d\si
$$
where $\de_0$ is, as usual, the limit of $\de_\ve$ as $\de\to 0$ and
\begin{multline*}
\label{def-delta-parabolic}
E^\pm_0=\left\{(z,\si)\in \RR^{N+1}: 0\le |z'|<\sqrt{-2N\,\si\,\log(4\pi \si)},\, 0<\si<\frac1{4\pi},\right. \\ 
\left.  0 \lessgtr z_1 , \de_0 \lessgtr  z_1-a\,\si+\frac{\lan C\,z', z'\ran}{2}\right\}.
\end{multline*}
\par
Thus, $\de_0$ results to be the solution of
$$
\int_{E'}\left[\de_0+a\,\si-\frac12\,\lan C\,z', z'\ran\right]\,\frac{|z'|^2}{\si^2}\,dz'\,d\si=0,
$$
where
$$
E'=\left\{(z',\si)\in \RR^{N}: 0\le |z'|<\sqrt{-2N\,\si\,\log(4\pi \si)},\, 0<\si<\frac1{4\pi}\right\},
$$
and hence 
\begin{multline*}
\de_0\,\int_{E^*}r^N \si^{-2}\,dr\,d\si=-a\,\int_{E^*}r^N \si^{-1}\,dr\,d\si+\\
\frac1{2\,(N-1)}\,\left[\tr(A)-\frac{\lan A\,\xi, \xi\ran}{|\xi|^2}\right]\,\int_{E^*}r^{N+2} \si^{-2}\,dr\,d\si.
\end{multline*}
Finally, Lemma \ref{lem:numbers-parabolic} gives that
$$
\de_0=\frac1{4\pi}\,\left(\frac{N-1}{N+1}\right)^{\frac{N+1}{2}+1}
\left\{-a+\frac{N}{N-1}\,\left[\tr(A)-\frac{\lan A\,\xi, \xi\ran}{|\xi|^2}\right]\right\}.
$$

\medskip

{\bf Case $\mathbf{p=\infty}$.}
For what we already showed at the beginning of this proof, we know that
\begin{multline*}
\frac{\pi_\infty(\ve,q)-q(x,t)}{\ve}=\frac12\min_{(z,\si)\in E}\left[\xi\cdot z+\ve\,(-a\,\si+\lan A\,z,z\ran/2)\right]+\\
\frac12\max_{(z,\si)\in E}\left[\xi\cdot z+\ve\,(-a\,\si+\lan A\,z,z\ran/2)\right].
\end{multline*}
\par
Now, notice that if $\ve$ is sufficiently small,  since $\xi\not=0$, the minimum and the maximum are attained at some points $(z^+_\ve, \si^+_\ve)$ and $(z^-_\ve, \si^-_\ve)$ on $\PA E$. Thus, there exist two Lagrange multipliers $\la^+_\ve$ and $\la^-_\ve$ such that the 
following three equations hold:
\begin{equation}
\label{lagrange-multipliers}
\begin{array}{cc}
&\xi+\ve\,A z^\pm_\ve=\la^\pm_\ve\,z^\pm_\ve, \quad -\ve\,a=\la^\pm_\ve N \{\log(4\pi\si^\pm_\ve)+1\}, \vspace{5pt} \\ 
&|z^\pm_\ve|^2+2N\si^\pm_\ve\log(4\pi\si^\pm_\ve)=0.
\end{array}
\end{equation}
\par
Since
$$
\max_{(z,\si)\in E}(\xi\cdot z)=|\xi|\,\sqrt{\frac{N}{2\pi e}} \ \mbox{ and } \ \min_{(z,\si)\in E}(\xi\cdot z)=- |\xi|\,\sqrt{\frac{N}{2\pi e}},
$$
a straightforward asymptotic analysis on the system \eqref{lagrange-multipliers} informs us that
$$
z^\pm_\ve=\pm\sqrt{\frac{N}{2\pi e}}\frac{\xi}{|\xi|}+o(\ve) \ \mbox{ and } \ 
\si^\pm_\ve=\frac1{4\pi e}+o(\ve) \ \mbox{ as } \ \ve\to 0.
$$
Therefore, we obtain:
\begin{multline*}
\lim_{\ve\to 0}\frac{\pi_\infty(\ve,q)-q(x,t)}{\ve^2}=\\
\lim_{\ve\to 0}\left\{\xi\cdot\frac{z^-_\ve+z^+_\ve}{2\ve}-a\, \frac{\si^-_\ve+\si^+_\ve}{2}+\frac{\lan A z^-_\ve, z^-_\ve\ran+\lan A z^+_\ve, z^+_\ve\ran}{4}\right\}=\\
\frac1{4\pi e}\left(-a+N\,\frac{\lan A \xi, \xi\ran}{|\xi|^2}\right),
\end{multline*}
as desired.
\end{proof}

\begin{thm}[Asymptotics for $\pi_p(\ve,u)$ as $\ve\to 0$]
Let $1\le p\le \infty$. Assume $(x,t)\in\Om_T$, $u\in C^2(\Om_T)$ and $\nabla u(x,t)\ne 0$.
\par
Then
\begin{multline}
\pi_p(\ve,u)(x,t)=u(x,t)+\\
\frac1{4\pi}\,\left(1-\frac2{N+p}\right)^{\frac{N+p}{2}+1}
\left\{-u_t(x,t)+\frac{N}{N+p-2}\,\De^n_p u(x,t)\right\}\,\ve^2+
o(\ve^2), 
\end{multline}
 as $\ve\to 0$.
\end{thm}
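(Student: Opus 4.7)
The plan is to adapt the argument of Theorem \ref{thm:p=p} to the parabolic setting, reducing the general smooth case to the quadratic case already handled by Lemma \ref{lm:qp-parabolic}. Let $q(y,s)$ be the parabolic second-order Taylor polynomial of $u$ at $(x,t)$,
\begin{equation*}
q(y,s) = u(x,t) + \nabla u(x,t)\cdot(y-x) + u_t(x,t)(s-t) + \frac{1}{2}\,\lan \nabla^2 u(x,t)(y-x),\, y-x\ran,
\end{equation*}
and set $\xi = \nabla u(x,t) \neq 0$, $a = u_t(x,t)$, $A = \nabla^2 u(x,t)$. By the identity $\tr(A) + (p-2)\lan A\xi,\xi\ran/|\xi|^2 = \De_p^n u(x,t)$, the conclusion of Lemma \ref{lm:qp-parabolic} applied to $q$ is \emph{exactly} the asymptotic expansion claimed in the statement for $\pi_p(\ve,u)(x,t)$, since $q(x,t) = u(x,t)$. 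Hence the only real task is to transfer the expansion from $q$ to $u$ with an $o(\ve^2)$ error.

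The key preliminary observation is the parabolic scaling of the heat ball. Unpacking $\Phi(x-y,t-s)>\ve^{-N}$ yields $0 < t-s < \ve^2/(4\pi)$ and $|y-x|^2 < -2N(t-s)\log\bigl(4\pi(t-s)/\ve^2\bigr)$ on $E_\ve(x,t)$, so that $|y-x| = O(\ve)$ and $|t-s| = O(\ve^2)$ uniformly there. Since $u\in C^2(\Om_T)$, Taylor's formula (with the mixed space-time and higher-order terms of orders $O(\ve^3)$ and $O(\ve^4)$ absorbed into the remainder) then gives: for every $\eta>0$ there exists $\ve_\eta>0$ with $\ol{E_\ve(x,t)}\subset\Om_T$ and
\begin{equation*}
|u(y,s) - q(y,s)| < \eta\,\ve^2 \quad \text{for all } (y,s)\in \ol{E_\ve(x,t)},\ 0<\ve<\ve_\eta.
\end{equation*}

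To conclude, I would combine the lift-invariance $\pi_p(\ve,q\pm\eta\ve^2)(x,t) = \pi_p(\ve,q)(x,t) \pm \eta\ve^2$ (Proposition \ref{prop:lift&homogeneity}(i) applied with $X = \ol{E_\ve(x,t)}$) with the monotonicity of $\pi_p$ (Theorem \ref{thm:monotonicity} on the same $X$ with $d\nu(y,s) = |x-y|^2/(t-s)^2\,dy\,ds$, as noted in the paragraph preceding Corollary \ref{thm:characterization-parabolic}) to sandwich
\begin{equation*}
\frac{\pi_p(\ve,q)(x,t) - u(x,t)}{\ve^2} - \eta \,\le\, \frac{\pi_p(\ve,u)(x,t) - u(x,t)}{\ve^2} \,\le\, \frac{\pi_p(\ve,q)(x,t) - u(x,t)}{\ve^2} + \eta.
\end{equation*}
Substituting the expansion from Lemma \ref{lm:qp-parabolic} into the outer quotients, passing to $\liminf$ and $\limsup$ as $\ve\to 0$, and finally letting $\eta\to 0$ yields the asserted formula.

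The main analytic obstacle has already been dispatched inside Lemma \ref{lm:qp-parabolic}, where the variational problem characterizing $\pi_p$ was analyzed case by case ($1<p<\infty$, $p=1$, $p=\infty$) via \eqref{characterization-parabolic}, Lagrange multipliers, and direct min/max analysis. At the present level the sole delicate point is the uniform $o(\ve^2)$ control of $u-q$ on $E_\ve(x,t)$, which relies critically on the correct parabolic scaling: because $|s-t| = O(\ve^2)$ on the heat ball, the temporal term $u_t(x,t)(s-t)$ enters the Taylor expansion at precisely order $\ve^2$, matching the spatial Hessian contribution and accounting for the $-u_t$ term in the final expansion.
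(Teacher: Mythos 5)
Your proposal is correct and follows essentially the same approach as the paper's proof: pick the parabolic second-order Taylor polynomial $q$ at $(x,t)$, use the lift-invariance of Proposition \ref{prop:lift&homogeneity}(i) and the monotonicity of Theorem \ref{thm:monotonicity} to sandwich $\pi_p(\ve,u)$ between $\pi_p(\ve,q)\pm\eta\ve^2$, feed in Lemma \ref{lm:qp-parabolic}, and let $\eta\to 0$. Your added discussion of the parabolic scaling ($|y-x|=O(\ve)$, $|t-s|=O(\ve^2)$ on $E_\ve(x,t)$) merely spells out the uniform $o(\ve^2)$ Taylor estimate that the paper states without elaboration; it is not a different route.
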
 

\begin{proof}
Let $\ve>0$ be such that $\ol{E_\ve(x,t)}\subset\Om_T$ and consider the function $q(y,s)$ in Lemma \ref{lm:qp-parabolic} with $q(x,t)=u(x,t)$, $a=u_t(x,t)$, $\xi=\na u(x,t)$, and $A=\na^2u(x,t)$; then, set $u_\ve(z,\si)=u(x+\ve z, t-\ve^2 \si)$ and $q_\ve(z,\si)=u(x+\ve z, t-\ve^2 \si)$. 
\par
Since $u\in C^2(\Om_T)$, for every $\eta>0$ there exists $\ve_\eta>0$ such that
$$
|u_\ve(z,\si)-q_\ve(z,\si)|<\eta\,\ve^2 \ \mbox{ for every } \ z\in\ol{E} \ \mbox{ and } \ 0<\ve<\ve_\eta.
$$
Thus, by Proposition \ref{prop:lift&homogeneity},  \eqref{riscale-parabolic} and Theorem \ref{thm:monotonicity},
\begin{multline*}
\frac{\pi_p(\ve, q)(x,t)-q(x,t)}{\ve^2}-\eta\le 
\frac{\pi_p(\ve, u)(x,t)-q(x,t)}{\ve^2}\le \\
\frac{\pi_p(\ve, q)(x,t)-q(x,t)}{\ve^2}+\eta.
\end{multline*}
Therefore, Lemma \ref{lm:qp-parabolic} implies that
\begin{multline*}
\frac1{4\pi}\,\left(1-\frac2{N+p}\right)^{\frac{N+p}{2}+1}
\left\{-u_t(x,t)+\frac{N}{N+p-2}\,\De^n_p u(x,t)\right\} -\eta\le \\
\liminf_{\ve\to 0} \frac{\pi_p(\ve, u)(x,t)-q(x,t)}{\ve^2}\le 
\limsup_{\ve\to 0} \frac{\pi_p(\ve, u)(x,t)-q(x,t)}{\ve^2}\le \\
\frac1{4\pi}\,\left(1-\frac2{N+p}\right)^{\frac{N+p}{2}+1}
\left\{-u_t(x,t)+\frac{N}{N+p-2}\,\De^n_p u(x,t)\right\} +\eta.
\end{multline*}
The desired conclusion follows at once, since $\eta$ is arbitrary.
\end{proof}

\begin{cor}
\label{cor:}
Let $u\in C^2(\Om_T)$. The following assertions are equivalent:
\begin{enumerate}[(i)]
\item
$-u_t(x,t)+\frac{N}{N+p-2}\,\De_p^n u(x,t)=0$,
\item
$u(x)=\pi_p(\ve,u)(x,t)+o(\ve^2)$ as $\ve\to 0$,
\end{enumerate}
at any point $(x,t)\in\Om_T$ such that $\na u(x,t)\not=0$.
\end{cor}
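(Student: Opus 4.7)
The plan is to deduce the equivalence as an immediate corollary of the asymptotic expansion established in the preceding theorem. That theorem asserts that, for $u\in C^2(\Om_T)$ and $(x,t)\in\Om_T$ with $\na u(x,t)\ne 0$,
$$
\pi_p(\ve,u)(x,t)-u(x,t)=C_{N,p}\left\{-u_t(x,t)+\frac{N}{N+p-2}\,\De_p^n u(x,t)\right\}\ve^2+o(\ve^2)
$$
as $\ve\to 0$, where $C_{N,p}=\frac1{4\pi}\bigl(1-\tfrac{2}{N+p}\bigr)^{(N+p)/2+1}$ (understood as $\frac{1}{4\pi e}$ when $p=\infty$, as computed explicitly in Lemma \ref{lm:qp-parabolic}).

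The first observation is that $C_{N,p}>0$ for every $N\ge 2$ and every $1\le p\le\infty$, since the base $1-\frac{2}{N+p}\in(0,1]$. The second, and final, step is to read off the equivalence at a fixed $(x,t)$ where $\na u(x,t)\ne 0$: assertion (ii) says exactly that the left-hand side of the displayed expansion is $o(\ve^2)$; dividing the expansion by $\ve^2$ and letting $\ve\to 0$ therefore forces
$$
C_{N,p}\left\{-u_t(x,t)+\frac{N}{N+p-2}\,\De_p^n u(x,t)\right\}=0,
$$
which, since $C_{N,p}\ne 0$, is exactly (i). Conversely, if (i) holds at $(x,t)$, the bracketed expression vanishes and the asymptotic expansion immediately yields (ii).

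There is no genuine obstacle in this corollary, as its entire substance is already contained in the preceding theorem's asymptotic formula; the proof amounts to nothing more than noting that the leading-order coefficient $C_{N,p}$ is nonzero, so that (ii) can be inverted into the pointwise equation (i). One may also note, in parallel with Corollary \ref{th:main} for the elliptic case, that the argument does not require any further regularity or viscosity machinery, because $u$ is assumed $C^2$ and $\na u(x,t)\ne 0$ at the point under consideration.
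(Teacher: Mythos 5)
Your proof is correct and matches what the paper leaves implicit: the corollary is an immediate consequence of the asymptotic expansion in the preceding theorem, and the only observation needed is that the constant $C_{N,p}=\frac{1}{4\pi}\bigl(1-\frac{2}{N+p}\bigr)^{(N+p)/2+1}$ is strictly positive, so the bracketed term can be isolated. The paper itself states this corollary without proof, exactly as it does for the parallel elliptic Corollary~\ref{th:main}, and your argument is the one the authors clearly have in mind.
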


We do not provide the proof of Theorem \ref{th:pa-visco}, since is a straightforward re-adaptation
of that of Theorem \ref{th:main:visco}, once the following definitions are established. 

\medskip

A function $u\in C(\Om_T)$ is a {\it viscosity solution of $u_t=\frac{N}{N+p-2}\Delta_p^n u$ in $\Om_T$}, if both of the following requisites hold at every $(x,t)\in\Om_T$:
\begin{enumerate}[(i)]
\item
for any function $\phi$ of class $C^2$ near $(x,t)$ such that $u-\phi$ has a strict minimum at $(x,t)$ with 
$u(x,t)=\phi(x,t)$ and $\nabla\phi(x,t)\ne 0$, there holds that  $\frac{N}{N+p-2}\Delta_p^n u(x,t)\leq u_t(x,t)$;
\item 
for any function $\phi$ of class $C^2$ near $(x,t)$ such that $u-\phi$ has a strict maximum at $(x,t)$ with 
$u(x,t)=\phi(x,t)$ and $\nabla\phi(x,t)\ne 0$, there holds that  $\frac{N}{N+p-2}\Delta_p^n u(x,t)\geq u_t(x,t)$. 
\end{enumerate}
\par
We say that a function $u\in C(\Om_T)$ satisfies at $(x,t)\in\Om_T$ the {\it asymptotic mean value property (AMVP)} 
$$
u(x,t)=\pi_p(\ve,u)(x,t)+o(\ve^2) \ \mbox{ as } \ \ve\to 0
$$
{\it in  the viscosity sense}
if both of the following requisites hold:
\begin{enumerate}[(a)]
\item
for any function $\phi$ of class $C^2$ near $(x,t)$ such that $u-\phi$ has a strict minimum at $(x,t)$ with 
$u(x,t)=\phi(x,t)$ and $\nabla\phi(x,t)\ne 0$, there holds that 
$$
\phi(x,t)\geq\pi_p(\ve,\phi)(x,t)+o(\ve^2) \ \mbox{ as } \ \ve\to 0;
$$
\item
for any function $\phi$ of class $C^2$ near $(x,t)$ such that $u-\phi$ has a strict maximum at $(x,t)$ with 
$u(x,t)=\phi(x,t)$ and $\nabla\phi(x,t)\ne 0$, there holds that 
$$
\phi(x,t)\leq\pi_p(\ve,\phi)(x,t)+o(\ve^2) \ \mbox{ as } \ \ve\to 0.
$$
\end{enumerate}

\section{Useful integrals}
\label{sec:computations}

We begin with the computation of some useful integrals.

\begin{lem}
\label{lem:integrals}
Let $\SS^{N-1}$ be the unit sphere in $\RR^N$. Let $\xi\in\RR^N\setminus\{0\}$ and $A$ be an $N\times N$ symmetric matrix. Then for $1<p<\infty$ we have that
\begin{equation}
\label{int1}
\frac{\int_{\SS^{N-1}} |\xi\cdot y|^{p-2}\lan A y, y\ran\,dS_y}{\int_{\SS^{N-1}} |\xi\cdot y|^{p-2}\,dS_y}=
\frac1{N+p-2}\left\{\tr(A)+(p-2)\,\frac{\lan A \xi, \xi\ran}{|\xi|^2}\right\}
\end{equation}
and
\begin{equation}
\label{int2}
\frac{\int_B |\xi\cdot y|^{p-2}\lan A y, y\ran\,dy}{\int_B |\xi\cdot y|^{p-2}\,dy}=
\frac1{N+p}\left\{\tr(A)+(p-2)\,\frac{\lan A \xi, \xi\ran}{|\xi|^2}\right\}.
\end{equation}
\end{lem}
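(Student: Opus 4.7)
The plan is to exploit the rotational symmetry of $\SS^{N-1}$ and $B$ to reduce each identity to explicit one-dimensional Beta-function integrals. First, pick a rotation $R$ with $R^T\xi=|\xi|\,e_1$ and change variables $y=Rz$ in both the numerator and the denominator of \eqref{int1}-\eqref{int2}. Since $dS$ and $dy$ are rotation invariant, $\xi\cdot y=|\xi|\,z_1$ and $\lan Ay,y\ran=\lan Cz,z\ran$ with $C=R^TAR$, the factor $|\xi|^{p-2}$ cancels from numerator and denominator. Using $\tr(C)=\tr(A)$ and $C_{11}=\lan ARe_1,Re_1\ran=\lan A\xi,\xi\ran/|\xi|^2$, both statements then reduce to showing
$$\frac{\int|z_1|^{p-2}\lan Cz,z\ran\,d\mu}{\int|z_1|^{p-2}\,d\mu}=\frac{\tr(C)+(p-2)\,C_{11}}{d},$$
where $d\mu$ denotes the relevant rotation-invariant measure, with $d=N+p-2$ on the sphere and $d=N+p$ on the ball.

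Next, expand $\lan Cz,z\ran=\sum_{i,j}C_{ij}z_iz_j$. Off-diagonal terms vanish because $|z_1|^{p-2}z_iz_j$ is odd in at least one coordinate and $d\mu$ is symmetric under every sign flip, and the diagonal integrals $\int|z_1|^{p-2}z_i^2\,d\mu$ with $i\ge 2$ are all equal by the permutation symmetry of $d\mu$ in $z_2,\dots,z_N$. Writing
$$L=\int|z_1|^p\,d\mu,\qquad M=\int|z_1|^{p-2}\,d\mu,\qquad K=\int|z_1|^{p-2}z_2^2\,d\mu,$$
the ratio in question becomes $(C_{11}L+(\tr(C)-C_{11})K)/M$, so everything reduces to proving $L/M=(p-1)/d$ and $K/M=1/d$.

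For the sphere I would use $|z|^2=1$, which gives $(N-1)K=M-L$ and pins $K/M$ to $L/M$; parametrizing $z_1=\cos\theta$ with $dS=\om_{N-2}(\sin\theta)^{N-2}\,d\theta$ turns $L$ and $M$ into classical Beta integrals, and the recursion $\Ga(x+1)=x\,\Ga(x)$ produces $L/M=(p-1)/(N+p-2)$ at a glance, establishing \eqref{int1}. For the ball I would pass to polar coordinates $z=r\,y$, $r\in(0,1)$, $y\in\SS^{N-1}$: the radial integrations add a factor $1/(N+p)$ to $L$ and to $(N-1)K=\int|z_1|^{p-2}(|z|^2-z_1^2)\,dz$, and $1/(N+p-2)$ to $M$; combining with the spherical formulas already established gives $L/M=(p-1)/(N+p)$ and $K/M=1/(N+p)$, which is \eqref{int2}. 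No step is truly delicate; the only real bookkeeping is tracking the radial exponents in the ball computation and relating it cleanly to the spherical formulas.
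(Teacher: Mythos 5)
Your proof is correct, but it takes a genuinely different route from the paper's for the key second-moment computation. Both arguments begin identically (rotate so that $\xi$ is aligned with $e_1$, cancel $|\xi|^{p-2}$, expand $\lan Cz,z\ran$, and kill off-diagonal terms by parity), and both reduce matters to evaluating the ratios $L/M$ and $K/M$ in your notation. The paper then dispatches these in one stroke via the divergence theorem: it writes
$$\int_{\SS^{N-1}} |\te_1|^{p-2}\te_i\te_j\,dS_\te=\int_B \frac{\PA}{\PA y_j}\bigl(|y_1|^{p-2}y_i\bigr)\,dy=\bigl[\delta_{ij}+(p-2)\delta_{i1}\delta_{1j}\bigr]\int_B|y_1|^{p-2}\,dy,$$
which simultaneously produces the coefficient $\delta_{ij}+(p-2)\delta_{i1}\delta_{1j}$ and the link between the ball integral $\int_B|y_1|^{p-2}\,dy$ and the spherical integral (the radial factor $1/(N+p-2)$), so that \eqref{int1} falls out immediately and \eqref{int2} follows by one more radial scaling. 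You instead evaluate $L/M$ explicitly via the trigonometric substitution $z_1=\cos\theta$ and the Gamma/Beta recursion, and recover $K/M$ from the constraint $|z|^2=1$ (i.e.\ $(N-1)K=M-L$), then pass from sphere to ball by tracking the radial exponents $r^{N+p-1}$ versus $r^{N+p-3}$. Both routes are sound; the paper's is shorter and avoids any special functions, while yours is more elementary and self-contained, making the dimension-dependent constants transparent at the cost of a bit more bookkeeping.
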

\begin{proof}
Let $R$ be a rotation matrix such that $^t R \xi=|\xi|\,e_1$; by the change of variables $y= R \te$, we have that
$$
\frac{\int_{\SS^{N-1}} |\xi\cdot y|^{p-2}\lan A y, y\ran\,dS_y}{\int_{\SS^{N-1}} |\xi\cdot y|^{p-2}\,dS_y}=\frac{\int_{\SS^{N-1}} |\te_1|^{p-2}\lan (^t R A R)\,\te, \te\ran\,dS_\te}{\int_{\SS^{N-1}} |\te_1|^{p-2}\,dS_\te}.
$$
On the other hand, 
\begin{multline*}
\int_{\SS^{N-1}} |\te_1|^{p-2} \te_i\,\te_j\,dS_\te=
\int_B \frac{\PA}{\PA y_j}(|y_1|^{p-2} y_i)\,dy=\\
[\delta_{ij}+(p-2) \delta_{i1} \delta_{1j}]
\int_B |y_1|^{p-2} dy=\frac{\delta_{ij}+(p-2) \delta_{i1} \delta_{1j}}{N+p-2}\,
\int_{\SS^{N-1}} |\te_1|^{p-2}\,dS_\te,
\end{multline*}
where we have used the divergence theorem in the first equality. Therefore, we obtain that
\begin{multline*}
\frac{\int_{\SS^{N-1}} |\xi\cdot y|^{p-2}\lan A y, y\ran\,dS_y}{\int_{\SS^{N-1}} |\xi\cdot y|^{p-2}\,dS_y}=\\
\frac{\tr(^t R A R)+(p-2)\lan (^t R A R)\,e_1, e_1\ran}{N+p-2}=\\
\frac1{N+p-2}\,\left\{\tr(A)+(p-2)\,\frac{\lan A\,\xi, \xi\ran}{|\xi|^2}\right\}.
\end{multline*}
\par
Formula \eqref{int2} easily follows from \eqref{int1}.
\end{proof}

\begin{lem}
\label{lem:numbers-parabolic}
Let $\al>0$ and $\be<\al+1$ be real numbers and let
$$
E_*=\left\{(r,\si)\in\RR^2: 0<r<\sqrt{-2N\,\si\,\log(4\pi\si)},\,0<\si<\frac1{4\pi}\right\}.
$$
Then
\begin{equation}
\label{C-alpha-beta}
\int_{E_*} r^{2\al-1} \si^{-\be}\,dr d\si=\frac{2^{2\be-\al-3}\,\pi^{\be-\al-1}\,N^\al}{\al\,(\al-\be+1)^{\al+1}}\,\Ga(\al+1).
\end{equation}
\end{lem}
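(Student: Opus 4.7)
The plan is to carry out the double integral iteratively, integrating first in $r$ and then in $\si$ via a logarithmic substitution that reduces the $\si$-integral to a Gamma function.

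First, I would fix $\si\in(0,\tfrac{1}{4\pi})$ and perform the inner integration
$$
\int_0^{R(\si)} r^{2\al-1}\,dr=\frac{R(\si)^{2\al}}{2\al},\qquad R(\si)=\sqrt{-2N\si\log(4\pi\si)},
$$
which is legitimate since $\al>0$ guarantees integrability at $r=0$ and the upper limit $R(\si)$ is finite and nonnegative on the interval. This produces
$$
\int_{E_*}r^{2\al-1}\si^{-\be}\,dr\,d\si=\frac{(2N)^\al}{2\al}\int_0^{1/(4\pi)}\si^{\al-\be}\,[-\log(4\pi\si)]^\al\,d\si.
$$

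Next, I would apply the substitution $t=-\log(4\pi\si)$, so that $\si=e^{-t}/(4\pi)$ and $d\si=-e^{-t}/(4\pi)\,dt$, with $t$ running from $\infty$ down to $0$ as $\si$ runs from $0$ up to $1/(4\pi)$. The remaining one-dimensional integral becomes
$$
\int_0^{1/(4\pi)}\si^{\al-\be}[-\log(4\pi\si)]^\al\,d\si=\frac{1}{(4\pi)^{\al-\be+1}}\int_0^\infty t^\al e^{-(\al-\be+1)t}\,dt.
$$
The hypothesis $\be<\al+1$ is exactly what is needed to make the exponent in the last integrand negative, ensuring convergence. A further rescaling $u=(\al-\be+1)t$ identifies this integral as $\Ga(\al+1)/(\al-\be+1)^{\al+1}$.

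Combining the two steps yields
$$
\int_{E_*}r^{2\al-1}\si^{-\be}\,dr\,d\si=\frac{(2N)^\al}{2\al}\cdot\frac{1}{(4\pi)^{\al-\be+1}}\cdot\frac{\Ga(\al+1)}{(\al-\be+1)^{\al+1}},
$$
and the only remaining task is bookkeeping of powers of $2$ and $\pi$: writing $(2N)^\al=2^\al N^\al$ and $(4\pi)^{\al-\be+1}=2^{2(\al-\be+1)}\pi^{\al-\be+1}$ reduces the constant in front to $2^{2\be-\al-3}\pi^{\be-\al-1}N^\al/[\al(\al-\be+1)^{\al+1}]$, matching \eqref{C-alpha-beta}. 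There is no real obstacle here; the delicate point is simply keeping track of the exponents of $2$ and $\pi$ and invoking the condition $\be<\al+1$ at the right moment to justify the Gamma-function evaluation.
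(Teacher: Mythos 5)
Your proof is correct and follows essentially the same route as the paper's: integrate in $r$ first, apply the logarithmic substitution $t=-\log(4\pi\si)$ (the paper phrases it as $4\pi\si=e^{-\tau}$), and rescale to recognize $\Ga(\al+1)/(\al-\be+1)^{\al+1}$, with the hypothesis $\be<\al+1$ used exactly where you invoke it. The power-of-$2$ and $\pi$ bookkeeping checks out and reproduces \eqref{C-alpha-beta}.
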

\begin{proof}
The result follows from the calculations:
\begin{multline*}
\int_{E_*} r^{2\al-1} \si^{-\be}\,dr d\si=\frac1{2\al}\,\int_0^{\frac1{4\pi}}\si^{-\be}\{-2N\si\,\log(4\pi\si)\}^\al\,d\si=\\
\frac{2^{2\be-\al-3}\,\pi^{\be-\al-1}\,N^\al}{\al}\,\int_0^\infty\tau^\al\,e^{-(\al-\be+1)\tau}\,d\tau=\\
\frac{2^{2\be-\al-3}\,\pi^{\be-\al-1}\,N^\al}{\al\,(\al-\be+1)^{\al+1}}\,\int_0^\infty\tau^\al\,e^{-\tau}\,d\tau;
\end{multline*}
in the second equality we used the substitution $4\pi\si=e^{-\tau}$.
\end{proof}
\begin{lem}
\label{lem:integrals-parabolic}
Let $\xi$ and $A$ be as in Lemma \ref{lem:integrals}. Then for $1<p<\infty$ we have that
\begin{equation}
\label{int1-parabolic}
\frac{\int_E |\xi\cdot z|^{p-2}\,\si\,d\nu(z,\si)}{\int_E |\xi\cdot z|^{p-2}\,d\nu(z,\si)}=
\frac1{4\pi}\,\left(\frac{N+p-2}{N+p}\right)^{1+\frac{N+p}{2}}
\end{equation}
and
\begin{multline}
\label{int2-parabolic}
\frac{\int_E |\xi\cdot z|^{p-2}\lan A z, z\ran\,d\nu(z,\si)}{\int_E |\xi\cdot z|^{p-2}\,d\nu(z,\si)}=\\
\frac1{2\pi}\,\frac{N}{N+p-2}\,\left(\frac{N+p-2}{N+p}\right)^{1+\frac{N+p}{2}}\,\left\{\tr(A)+(p-2)\,\frac{\lan A \xi, \xi\ran}{|\xi|^2}\right\}.
\end{multline}
\end{lem}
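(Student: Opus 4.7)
The plan is to proceed in the same spirit as Lemma \ref{lem:integrals}, exploiting that both the integrand and the set $E$ decouple in spherical coordinates on $z$ and independently in the radial--time variables. First I would reduce to the case $\xi=|\xi|\,e_1$ by substituting $z=R\zeta$ for a rotation $R$ with $R^{T}\xi=|\xi|\,e_1$; this preserves $E$ and $d\nu$, transforms $\lan Az,z\ran$ into $\lan C\zeta,\zeta\ran$ with $C=R^{T}AR$ (so that $C_{11}=\lan A\xi,\xi\ran/|\xi|^{2}$ and $\tr(C)=\tr(A)$), and lets the common factor $|\xi|^{p-2}$ cancel between numerator and denominator.

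Next I would pass to polar coordinates $\zeta=r\theta$, $\theta\in\SS^{N-1}$, so that $d\zeta=r^{N-1}\,dr\,dS_\theta$ and the constraint $\Phi(\zeta,\sigma)>1$ becomes $r<\sqrt{-2N\sigma\log(4\pi\sigma)}$, independently of $\theta$. Every integral over $E$ then factorises as a product of an angular integral over $\SS^{N-1}$ and a radial--time integral over the planar set $E_{*}$ of Lemma \ref{lem:numbers-parabolic}. For \eqref{int1-parabolic} the angular factor $\int_{\SS^{N-1}}|\theta_1|^{p-2}\,dS$ is identical on top and bottom and cancels, leaving the ratio
\[
\frac{\int_{E_{*}}r^{N+p-1}\sigma^{-1}\,dr\,d\sigma}{\int_{E_{*}}r^{N+p-1}\sigma^{-2}\,dr\,d\sigma},
\]
which Lemma \ref{lem:numbers-parabolic} evaluates with $\alpha=(N+p)/2$, $\beta=1$ and $\beta=2$; the stated closed form then follows from a simplification of the factor $(\alpha-1)^{\alpha+1}/\alpha^{\alpha+1}$.

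For \eqref{int2-parabolic} parity in each $\zeta_i$ kills the off-diagonal terms of $\lan C\zeta,\zeta\ran$, so only $\sum_{i} C_{ii}\zeta_i^{2}$ survives. The angular integrals $\int_{\SS^{N-1}}\theta_i^{2}|\theta_1|^{p-2}\,dS$ were already computed in the proof of Lemma \ref{lem:integrals} via the divergence theorem and equal $[\delta_{ii}+(p-2)\delta_{i1}]/(N+p-2)$ times $\int_{\SS^{N-1}}|\theta_1|^{p-2}\,dS$; summing against $C_{ii}$ yields the combination $\tr(A)+(p-2)\lan A\xi,\xi\ran/|\xi|^{2}$, while the remaining radial--time ratio $\int_{E_{*}}r^{N+p+1}\sigma^{-2}\,dr\,d\sigma/\int_{E_{*}}r^{N+p-1}\sigma^{-2}\,dr\,d\sigma$ is again handled by Lemma \ref{lem:numbers-parabolic} (now with $\alpha=(N+p)/2+1$ against $\alpha=(N+p)/2$). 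The only nontrivial aspect in the whole argument is bookkeeping: tracking the powers of $2$, $\pi$, $N$ and the $\Gamma$-factors so that they recombine into the exponent $1+(N+p)/2$ on $(N+p-2)/(N+p)$ and the prefactors $1/(4\pi)$ and $1/(2\pi)$; no new analytical ingredient is required beyond Lemmas \ref{lem:integrals} and \ref{lem:numbers-parabolic}.
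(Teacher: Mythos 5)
Your proposal is correct and follows essentially the same route as the paper's proof: pass to spherical coordinates so that each integral over $E$ factors into an angular integral over $\SS^{N-1}$ and a radial--time integral over $E_*$, evaluate the latter via Lemma \ref{lem:numbers-parabolic} with the appropriate exponents $\alpha,\beta$, and handle the angular ratio via the identity \eqref{int1} from Lemma \ref{lem:integrals}. The only cosmetic difference is that you unpack the angular divergence-theorem computation from the proof of Lemma \ref{lem:integrals} rather than citing \eqref{int1} directly, which the paper does.
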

\begin{proof}
By using  spherical coordinates, we calculate that
$$
\frac{\int_E |\xi\cdot z|^{p-2}\,\si\,d\nu(z,\si)}{\int_E |\xi\cdot z|^{p-2}\,d\nu(z,\si)}=
\frac{\int_{E_*}r^{p+N-1}\,\si^{-1} dr\,d\si}{\int_{E_*}r^{p+N-1}\,\si^{-2} dr\,d\si}
$$
and
$$
\frac{\int_E |\xi\cdot z|^{p-2}\lan A z, z\ran\,d\nu(z,\si)}{\int_E |\xi\cdot z|^{p-2}\,d\nu(z,\si)}=
\frac{\int_{E_*}r^{p+N+1}\,\si^{-2} dr\,d\si}{\int_{E_*}r^{p+N-1}\,\si^{-2} dr\,d\si}\,
\frac{\int_{\SS^{N-1}} |\xi\cdot y|^{p-2}\lan A y, y\ran\,dS_y}{\int_{\SS^{N-1}} |\xi\cdot y|^{p-2}\,dS_y}.
$$
Thus, \eqref{int1-parabolic} and \eqref{int2-parabolic} follow from the calculations
$$
\frac{\int_{E_*}r^{p+N-1}\,\si^{-1} dr\,d\si}{\int_{E_*}r^{p+N-1}\,\si^{-2} dr\,d\si}
$$
and \eqref{int1}.
\end{proof}
\par
For the reader's convenience, we recall the generalized dominated convergence theorem (see \cite{LL} for instance), that is needed for the proofs of Lemmas \ref{lm:qp} and \ref{lm:qp-parabolic}. 

\begin{thm}[Generalized dominated convergence theorem]
\label{generalized-L}
Let $(X,\nu)$ be a measure space and let $\{f_n\}_{n\in\mathbb N}$ 
and $\{g_n\}_{n\in\Bbb N}$ be sequences of measurable functions on $X$ such that
\begin{enumerate}[(i)]
\item
$f_n$ converges to a measurable function $f$ a.e. on $X$ as $n\to\infty$;
\item
each $g_n\in L^1(X, \nu)$ 
and $g_n$ converges to a  function $g$ in $ L^1(X, \nu)$ a.e. on $X$ as $n\to\infty$;
\item
$|f_n|\leq g_n$ a.e. on $X$ for all $n\in\mathbb{N}$;
\item
$\lim\limits_{n\to\infty}\int_X g_n\,d\nu=\int_X g\,d\nu$. 
\end{enumerate}

Then, we have that  
$$
\lim_{n\to\infty}\int_X f_n\,d\nu=\int_X f\,d\nu.
$$ 
\end{thm}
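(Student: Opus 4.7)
The plan is to deduce this from the classical Fatou lemma applied to two nonnegative auxiliary sequences. Hypothesis (iii) together with (i) ensures that $|f|\le g$ a.e.\ on $X$ in the limit, so $f\in L^1(X,\nu)$ (since $g\in L^1(X,\nu)$ by (ii) and (iv), the latter being redundant for integrability but used below). The two natural auxiliary sequences are $\{g_n+f_n\}$ and $\{g_n-f_n\}$, both of which are nonnegative by (iii).

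First I would apply Fatou's lemma to $g_n+f_n\ge 0$. Since $g_n+f_n\to g+f$ a.e.\ by (i) and (ii), we get
\[
\int_X (g+f)\,d\nu \;\le\; \liminf_{n\to\infty}\int_X (g_n+f_n)\,d\nu.
\]
The right-hand side splits as $\lim_{n\to\infty}\int_X g_n\,d\nu+\liminf_{n\to\infty}\int_X f_n\,d\nu$, which by hypothesis (iv) equals $\int_X g\,d\nu+\liminf_{n\to\infty}\int_X f_n\,d\nu$. Subtracting the finite quantity $\int_X g\,d\nu$ yields
\[
\int_X f\,d\nu \;\le\; \liminf_{n\to\infty}\int_X f_n\,d\nu.
\]

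Next I would perform the symmetric step with $g_n-f_n\ge 0$, which converges a.e.\ to $g-f$. Fatou's lemma gives
\[
\int_X (g-f)\,d\nu \;\le\; \liminf_{n\to\infty}\int_X (g_n-f_n)\,d\nu \;=\; \int_X g\,d\nu-\limsup_{n\to\infty}\int_X f_n\,d\nu,
\]
using (iv) once more together with the standard identity $\liminf(a_n-b_n)=\lim a_n-\limsup b_n$ when $\{a_n\}$ converges. Rearranging yields $\limsup_{n\to\infty}\int_X f_n\,d\nu\le\int_X f\,d\nu$.

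Combining the two inequalities we obtain
\[
\limsup_{n\to\infty}\int_X f_n\,d\nu\;\le\;\int_X f\,d\nu\;\le\;\liminf_{n\to\infty}\int_X f_n\,d\nu,
\]
which forces equality and establishes the desired convergence. The only subtle points are verifying that all the integrals under consideration are finite (which is guaranteed by $|f_n|\le g_n$ and $g_n\in L^1(X,\nu)$, together with hypothesis (iv) to pass the limit through $\int g_n\,d\nu$) and checking that Fatou's lemma applies to a sequence converging a.e.\ rather than everywhere, which is standard since one may alter the sequences on a null set without affecting any integrals.
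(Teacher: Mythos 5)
The paper does not actually supply a proof of this theorem: it states the result and refers the reader to Lieb--Loss for the standard argument. Your proof is correct and is precisely that standard argument (sometimes attributed to Pratt), applying Fatou's lemma to the nonnegative sequences $g_n+f_n$ and $g_n-f_n$ and then using hypothesis (iv) to cancel the finite quantity $\int_X g\,d\nu$ from both sides. One small observation worth making explicit: when you split $\liminf\int_X(g_n+f_n)\,d\nu$ into $\lim\int_X g_n\,d\nu+\liminf\int_X f_n\,d\nu$, you should note that $\liminf\int_X f_n\,d\nu$ is finite (indeed $\bigl|\int_X f_n\,d\nu\bigr|\le\int_X g_n\,d\nu$, which is bounded by (iv)), so there is no danger of an $\infty-\infty$ cancellation, and similarly in the $\limsup$ step. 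With that remark, the argument is complete and matches what the cited reference would give.
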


\bigskip
\footnotesize
\noindent\textit{Acknowledgments.}
The second author was supported by a PRIN grant of the 
italian MIUR and the Gruppo Nazionale per l'Analisi Matematica, la Probabilit\`a e le 
loro Applicazioni (GNAMPA) of the Italian Istituto Nazionale di Alta Matematica (INdAM).

\end{document}